\documentclass[a4paper]{amsart}

\usepackage[T1]{fontenc}
\usepackage[utf8]{inputenc}
\usepackage[english]{babel}
\usepackage{microtype}
\usepackage{graphicx}
\usepackage{xcolor}
\usepackage{csquotes}

\usepackage[backend=biber,sorting=nyt,giveninits=true,maxbibnames=9]{biblatex}

\usepackage{xpatch}

\makeatletter
\patchcmd\blx@bblinput{\blx@blxinit}
                      {\blx@blxinit
                      }{}{\fail}
\makeatother

\usepackage{mathtools, amsthm, amssymb}
\usepackage{bm, dsfont}
\numberwithin{equation}{section}

\newtheorem{theorem}{Theorem}[section]
\newtheorem{proposition}[theorem]{Proposition}
\newtheorem{corollary}[theorem]{Corollary}
\newtheorem{lemma}[theorem]{Lemma}
\newtheorem{remark}[theorem]{Remark}

\DeclareMathOperator{\trace}{trace}
\DeclareMathOperator{\Span}{span}
\DeclareMathOperator{\diag}{diag}

\DeclareMathOperator{\argmin}{arg\,min}
\DeclareMathOperator{\Id}{Id}

\newif\ifshowchanges
\showchangestrue
\definecolor{revisionblue}{RGB}{0,70,160}

\begin{document}

\title{Required Number of Points in $L_2$~Marcinkiewicz--Zygmund Inequalities}

\author{Felix Bartel}
\address{
    Mathematical Institute for Machine Learning and Data Science (MIDS), Catholic University of Eichstätt--Ingolstadt,
    Auf der Schanz 49,
    85049 Ingolstadt, Germany
}
\email{felix.bartel@ku.de}

\begin{abstract}
    We determine, up to absolute constants, the worst-case number of point evaluations required for a weighted $L_2$ Marcinkiewicz--Zygmund inequality for an $m$-dimensional complex function space.
    If $0<\varepsilon<1$ is the relative distortion, this number is
    \begin{equation*}
        \Theta\Big(\min\Big\{m^2,\frac{m}{\varepsilon^2}\Big\}\Big) ,
    \end{equation*}
    and exact discretization has the sharp worst-case value $m^2$.
    While the upper bounds follow from recent constructions, our contribution is the construction of function spaces that are hard to discretize and yield matching lower bounds.
    We use a trace-variance inequality for weighted subframes of unit-norm tight frames.
    One such instance is the complete-graph edge frame, which yields a construction in every dimension.
    Singer equiangular tight frames improve the constant when $m-1$ is a prime power, while maximal equiangular tight frames give the strongest bound possible using our method whenever they exist.
    We also derive consequences for the conditioning of weighted least-squares systems and for standard condition-number-based iteration estimates when these systems are solved by LSQR.
\end{abstract}

\subjclass[2020]{
    Primary
    41A17; Secondary
    65Y20, 68Q25 }
\keywords{Marcinkiewicz--Zygmund inequality, sampling complexity, weighted least squares, equiangular tight frame, spectral sparsification}

\maketitle

\section{Introduction}

Marcinkiewicz--Zygmund inequalities relate the continuous $L_2$ norm to finitely many weighted point evaluations.
They originated in the work of Marcinkiewicz and Zygmund \cite{MZ37} and are studied in general and specific settings, see e.g., \cite{Filbir11,Temlyakov18,DTT19,KKLT22}.
They are a central tool in approximation theory and in the analysis of weighted least-squares methods, see e.g., \cite{CDL13,BKPU23,Bartel23,CDKU26}.

Let $(\Omega,\nu)$ be a measure space and let $\mathcal F\subseteq L_2(\Omega,\nu)$ be an $m$-dimensional complex function space on which pointwise function evaluations are well-defined.
Distinct points $x_1,\dots,x_n\in\Omega$ and positive weights $w_1,\dots,w_n>0$ satisfy the weighted \emph{$L_2$ Marcinkiewicz--Zygmund (MZ) inequalities} with distortion $0\le\varepsilon<1$ if
\begin{equation}
    (1-\varepsilon)\|f\|_{L_2}^{2}
    \le \sum_{i=1}^{n} w_i |f(x_i)|^2
    \le (1+\varepsilon)\|f\|_{L_2}^{2}
    \quad\text{for all }f\in \mathcal F .
    \tag{MZ}\label{eq:mz}
\end{equation}
Repeated sampling points do not improve the support size, since their weights can be combined; we therefore only count distinct points.
Adopting the notation from \cite{DTT19}, we will write $\mathcal F\in \mathcal M_+^w(n,\varepsilon)$ whenever points and weights satisfying \eqref{eq:mz} exist.
We measure the worst-case sampling complexity by
\begin{equation*}
    \mathcal N(m,\varepsilon)
    \coloneqq \sup_{(\Omega,\nu), \mathcal F}
    \min\Big\{n\in\mathds N : \mathcal F\in\mathcal M_+^w(n,\varepsilon)\Big\} ,
\end{equation*}
where the supremum is over all $m$-dimensional complex function spaces.
Thus, $\mathcal N(m,\varepsilon)$ is the worst-case information complexity of stable weighted norm discretization by point evaluations.
The emphasis on the supremum is essential, as some structured spaces admit ``perfect'' $L_2$ MZ inequalities with $n=m$ and $\varepsilon=0$.
For instance, trigonometric polynomials sampled on an appropriate equispaced grid provide a basic example.
The question is whether there are function spaces for which the distortion cannot be reduced regardless of the choice of points and weights.

Several upper bounds are known.
Independent random sampling from the Christoffel distribution gives, with high probability, a sufficient sample size $n$ of order $m\log m/\varepsilon^2$, with an explicit bound in \cite{CDL13}.
Kadison--Singer methods yield existence results that remove the logarithm, cf.~\cite{NSU21}, and the constructive frame-subsampling version in \cite{BSU23} yields, for $0<\varepsilon<1$,
\begin{equation}\label{eq:constructive-upper}
    \mathcal N(m,\varepsilon)
    \le \left\lceil m \frac{1+\sqrt{1-\varepsilon^2}}{1-\sqrt{1-\varepsilon^2}}\right\rceil
    \le \frac{5m}{\varepsilon^2} .
\end{equation}
These estimates deteriorate as $\varepsilon\searrow 0$.
Exact discretization results in \cite{BKPSU25, ST25} give the uniform upper bound
\begin{equation}\label{eq:exact-upper}
    \mathcal N(m,\varepsilon) \le m^2.
\end{equation}

Our main result shows that this trade-off between distortion and the required number of points is unavoidable in the worst case.

\begin{theorem}\label{maintheorem}
    For every $m\ge 2$ and $0<\varepsilon<1$, we have
    \begin{equation*}
        \max\Big\{m,\frac{m(m+1)(1-\varepsilon^2)}{2(1+m\varepsilon^2)} \Big\}
        \le \mathcal N(m,\varepsilon)
        \le \min\Big\{m^2,\frac{5m}{\varepsilon^2}\Big\} .
    \end{equation*}
    Moreover, $\mathcal N(m,0)=m^2$.
    Consequently,
    \begin{equation}\label{eq:theta}
        \frac 18 \min\Big\{m^2,\frac{m}{\varepsilon^2}\Big\}
        \le \mathcal N(m,\varepsilon)
        \le 5 \min\Big\{m^2,\frac{m}{\varepsilon^2}\Big\}.
    \end{equation}
\end{theorem}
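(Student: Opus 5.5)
The upper bound $\min\{m^2,5m/\varepsilon^2\}$ is just \eqref{eq:constructive-upper} together with \eqref{eq:exact-upper}. The trivial lower bound $m$ comes from the left inequality in \eqref{eq:mz}. If $n<m$, some nonzero $f\in\mathcal F$ vanishes at $x_1,\dots,x_n$, because this is $n$ linear conditions on an $m$-dimensional space, and then $(1-\varepsilon)\|f\|^2\le 0$ is impossible. So all the work is in the quadratic lower bound, which must come from an explicit hard space. The two-sided bound \eqref{eq:theta} is then bookkeeping. In the regime $m\varepsilon^2\le 1$ the quadratic bound gives at least a constant multiple of $m^2$, provided $\varepsilon$ is also bounded away from $1$. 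In the regime $m\varepsilon^2\ge1$ it gives order $m/\varepsilon^2$, again for $\varepsilon$ not close to $1$. For $\varepsilon$ near $1$ the bound $m$ takes over, since then $\min\{m^2,m/\varepsilon^2\}\le 2m$, say. Checking the constant $1/8$ is a case split along these lines.

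\textbf{Construction.} Take a unit-norm tight frame $u_1,\dots,u_N\in\mathds C^m$, so that $\sum_j u_ju_j^*=\frac Nm\Id$. Let $\Omega=\{1,\dots,N\}$ with uniform measure $\nu_j=m/N$, and let $\mathcal F=\{j\mapsto\langle c,u_j\rangle : c\in\mathds C^m\}$. Tightness gives $\|f_c\|_{L_2}^2=\|c\|^2$, and the map $c\mapsto f_c$ is injective, so $\dim\mathcal F=m$. Weights $w_j$ on a support $S$ with $|S|=n$ satisfy \eqref{eq:mz} exactly when
\begin{equation*}
A\coloneqq\sum_{j\in S}w_ju_ju_j^*
\end{equation*}
has spectrum in $[1-\varepsilon,1+\varepsilon]$.

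\textbf{Trace-variance inequality.} Put $T=\trace A=\sum_j w_j$, so that $T\ge m(1-\varepsilon)$. Since the eigenvalues of $A$ lie in $[1-\varepsilon,1+\varepsilon]$, the variance bound gives
\begin{equation*}
\|A\|_F^2-\frac{T^2}{m}\le\sum_i(\lambda_i-1)^2\le m\varepsilon^2 .
\end{equation*}
On the other side, $\|A\|_F^2=\sum_{j,k}w_jw_k|\langle u_j,u_k\rangle|^2$. For an equiangular tight frame with $|\langle u_j,u_k\rangle|^2=\mu^2$ for $j\ne k$, this equals $(1-\mu^2)\sum w_j^2+\mu^2T^2$. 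Using Cauchy--Schwarz in the form $\sum w_j^2\ge T^2/n$, we get
\begin{equation*}
(1-\mu^2)\frac{T^2}{n}\le m\varepsilon^2+T^2\Big(\frac1m-\mu^2\Big).
\end{equation*}
For a maximal ETF we have $N=m^2$ and $\mu^2=1/(m+1)$. Dividing by $T^2$ and using $T\ge m(1-\varepsilon)$ then yields
\begin{equation*}
n\ge \frac{m^2}{m+1}\Big(\frac{1}{m(m+1)}+\frac{\varepsilon^2}{m(1-\varepsilon)^2}\Big)^{-1}.
\end{equation*}
At $\varepsilon=0$ this is exactly $m^2$, which matches \eqref{eq:exact-upper} and proves $\mathcal N(m,0)=m^2$ whenever such frames exist. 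Only the algebra needs adjusting to reach the stated form $m(m+1)(1-\varepsilon^2)/(2(1+m\varepsilon^2))$, for instance by bounding $T^2$ from both sides using $T\le m(1+\varepsilon)$.

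\textbf{Main obstacle: every dimension $m$.} Maximal ETFs are not known to exist in every dimension, so for general $m$ I would use the complete-graph edge frame, with vectors proportional to $e_a-e_b$, placed in a suitable $m$-dimensional space. Its Gram entries are no longer constant: $|\langle u_j,u_k\rangle|^2$ is either $0$ or $1/4$, depending on whether the edges $j,k$ share a vertex. So the cross term $\sum_{j\ne k}w_jw_k|\langle u_j,u_k\rangle|^2$ must be bounded below directly, using the graph structure.

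The first thing I would try is to write this cross term as $\sum_a d_a^2$ minus diagonal terms, where $d_a=\sum_{j\ni a}w_j$ is the weighted degree of vertex $a$. Then apply Cauchy--Schwarz to $\sum_a d_a^2\ge(2T)^2/(\#\text{vertices})$. This should reproduce the ETF computation up to the factor $2$ and the shift $m\to m+1$ seen in the stated bound. I expect this step to be the hardest, because the lower bound on the cross term is needed uniformly over all weightings. The bound $\mathcal N(m,0)=m^2$ in every dimension also needs separate attention. A quadratic lower bound from the edge frame only gives order $m^2/2$, so reaching exactly $m^2$ at $\varepsilon=0$ may require a separate exact-discretization argument, such as a dimension count on the rank-one matrices $u_ju_j^*$ in the $m^2$-dimensional space of Hermitian matrices.
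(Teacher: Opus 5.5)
\textbf{The genuine gap: the exact endpoint.} You prove $\mathcal N(m,0)=m^2$ only when a maximal ETF exists, i.e.\ conditional on Zauner's conjecture. The edge-frame space cannot supply it, because all of its $m(m+1)/2$ points already give an exact formula. A dimension count in the $m^2$-dimensional real space of Hermitian matrices only reproduces the upper bound. For the lower bound you need an explicit family of $m^2$ vectors in $\mathds C^m$ whose rank-one projectors are $\mathds R$-linearly independent.

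The paper uses $\bm e_i$ ($1\le i\le m$) together with $\bm e_i+\bm e_j$ and $\bm e_i+\mathrm i\bm e_j$ ($i<j$). In a vanishing real combination of their projectors, the $(i,j)$ entry is $\beta_{ij}-\mathrm i\gamma_{ij}$, which forces the coefficients of the last two families to vanish. The diagonal then forces the remaining coefficients to vanish. Now put counting measure on this set $\Omega$ and set $f_{\bm a}(\bm v)=\langle\bm a,\bm v\rangle$; no tightness or uniform measure is needed. An exact weighted formula supported on $J$ means $\sum_{\bm v\in J}w_{\bm v}\bm v\bm v^*=\sum_{\bm v\in\Omega}\bm v\bm v^*$ as Hermitian matrices. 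Uniqueness of coordinates in a basis then forces $J=\Omega$.

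\textbf{The variance step loses the stated constant.} Your graph computation, which you expected to be the hardest step, works and coincides with the paper's argument. With $d_a=\sum_{j\ni a}w_j$ you have the identity $\|A\|_F^2=\tfrac12\sum_j w_j^2+\tfrac14\sum_a d_a^2$. Cauchy--Schwarz over the $m+1$ vertices then gives
$\|A\|_F^2/T^2\ge\tfrac1{2n}+\tfrac1{m+1}=\tfrac1m+\tfrac12\big(\tfrac1n-\tfrac1N\big)$ with $N=m(m+1)/2$.
This is Lemma~\ref{tracevariance} with $\gamma_{\mathcal K}=1/2$, obtained without the line-graph spectrum: the projector Gram matrix equals $\tfrac12\bm I_N+\tfrac14\bm B^\top\bm B$, with $\bm B$ the unsigned incidence matrix, and $\bm w^\top\bm B^\top\bm B\bm w=\sum_a d_a^2$.

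Your upper estimate is the problem. Dropping the mean term and using $T\ge m(1-\varepsilon)$ yields the stated bound with $1-\varepsilon^2$ replaced by the strictly smaller $(1-\varepsilon)^2$. Your suggested repair via $T\le m(1+\varepsilon)$ cannot help, since $m\varepsilon^2/T^2$ is largest at the smallest $T$. Instead keep the discarded term. With $\bar\lambda=T/m$,
$\|A\|_F^2-T^2/m=\sum_i(\lambda_i-1)^2-m(\bar\lambda-1)^2\le m\big(\varepsilon^2-(\bar\lambda-1)^2\big)$,
and $\max_{x>0}\big(\varepsilon^2-(x-1)^2\big)/x^2=\varepsilon^2/(1-\varepsilon^2)$. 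This gives $\|A\|_F^2/T^2\le 1/(m(1-\varepsilon^2))$, which is exactly Lemma~\ref{kantorovich}. Combined with your degree bound it yields precisely $n\ge m(m+1)(1-\varepsilon^2)/(2(1+m\varepsilon^2))$, and the split at $\varepsilon^2=1/2$ for the constant $1/8$ then goes through as in the paper.

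A small slip: in your displayed ETF bound the prefactor should be $m/(m+1)$, not $m^2/(m+1)$; as printed it equals $m^3$ at $\varepsilon=0$. The ETF detour is not needed for this theorem anyway.
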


The phase transition in \eqref{eq:theta} occurs at $\varepsilon\asymp m^{-1/2}$.
For $\varepsilon < m^{-1/2}$ the quadratic exact-discretization scale $m^2$ is necessary in the worst case, while for $\varepsilon > m^{-1/2}$ the lower bound has the $m/\varepsilon^2$ behavior.
The endpoint $\varepsilon=0$ is sharper and uses a separate linear independence argument.

The lower bounds come from hard-to-discretize function spaces based on unit-norm tight frames $\{\bm\varphi_i\}_{i=1}^{N}$.
We center the rank-one projectors
\begin{equation*}
    Q_i = \langle\cdot,\bm\varphi_i\rangle\bm\varphi_i-\frac 1m \Id,
\end{equation*}
and measure the smallest eigenvalue $\gamma_\Phi$ of their Gram matrix on $\bm 1_N^\perp$.
A small weighted subset has an unavoidable variance, while an operator satisfying \eqref{eq:mz} must be close to the scalar matrix.
Combining these two facts produces a lower bound depending on $N$ and $\gamma_\Phi$.

For the edge frame of the complete graph, used in Theorem~\ref{maintheorem}, with $m+1$ vertices we obtain $\gamma_\Phi\ge 1/2$, with equality for $m\ge3$.
In this instance \eqref{eq:mz} is equivalent to the spectral sparsification of the complete graph.
We also show that equiangular tight frames (ETFs) maximize $\gamma_\Phi$ for fixed $m$ and $N$.
Consequently, whenever a complex ETF with $N$ vectors in dimension $m$ exists, we obtain the stronger estimate
\begin{equation}\label{eq:etfbound}
    \mathcal N(m,\varepsilon)
    \ge \max\Big\{m,\frac{N(1-\varepsilon^2)}{1+\frac{N-m}{m-1}\varepsilon^2}\Big\}.
\end{equation}
Singer difference sets yield ETFs with $N=m^2-m+1$ elements whenever $m-1$ is a prime power.
A maximal complex ETF has $N=m^2$ elements.
Conditional on Zauner's conjecture, such frames exist for every dimension and \eqref{eq:etfbound} becomes
\begin{equation*}
    \mathcal N(m,\varepsilon)
    \ge \max\Big\{m,\frac{m^2(1-\varepsilon^2)}{1+m\varepsilon^2}\Big\}.
\end{equation*}
This is the strongest bound obtainable from the trace-variance method developed here.

All bounds are shown in Figure~\ref{fig:eps} for a fixed dimension $m=998$.

\begin{figure}
    \begingroup
  \makeatletter
  \providecommand\color[2][]{\GenericError{(gnuplot) \space\space\space\@spaces}{Package color not loaded in conjunction with
      terminal option `colourtext'}{See the gnuplot documentation for explanation.}{Either use 'blacktext' in gnuplot or load the package
      color.sty in LaTeX.}\renewcommand\color[2][]{}}\providecommand\includegraphics[2][]{\GenericError{(gnuplot) \space\space\space\@spaces}{Package graphicx or graphics not loaded}{See the gnuplot documentation for explanation.}{The gnuplot epslatex terminal needs graphicx.sty or graphics.sty.}\renewcommand\includegraphics[2][]{}}\providecommand\rotatebox[2]{#2}\@ifundefined{ifGPcolor}{\newif\ifGPcolor
    \GPcolortrue
  }{}\@ifundefined{ifGPblacktext}{\newif\ifGPblacktext
    \GPblacktexttrue
  }{}\let\gplgaddtomacro\g@addto@macro
\gdef\gplbacktext{}\gdef\gplfronttext{}\makeatother
  \ifGPblacktext
\def\colorrgb#1{}\def\colorgray#1{}\else
\ifGPcolor
      \def\colorrgb#1{\color[rgb]{#1}}\def\colorgray#1{\color[gray]{#1}}\expandafter\def\csname LTw\endcsname{\color{white}}\expandafter\def\csname LTb\endcsname{\color{black}}\expandafter\def\csname LTa\endcsname{\color{black}}\expandafter\def\csname LT0\endcsname{\color[rgb]{1,0,0}}\expandafter\def\csname LT1\endcsname{\color[rgb]{0,1,0}}\expandafter\def\csname LT2\endcsname{\color[rgb]{0,0,1}}\expandafter\def\csname LT3\endcsname{\color[rgb]{1,0,1}}\expandafter\def\csname LT4\endcsname{\color[rgb]{0,1,1}}\expandafter\def\csname LT5\endcsname{\color[rgb]{1,1,0}}\expandafter\def\csname LT6\endcsname{\color[rgb]{0,0,0}}\expandafter\def\csname LT7\endcsname{\color[rgb]{1,0.3,0}}\expandafter\def\csname LT8\endcsname{\color[rgb]{0.5,0.5,0.5}}\else
\def\colorrgb#1{\color{black}}\def\colorgray#1{\color[gray]{#1}}\expandafter\def\csname LTw\endcsname{\color{white}}\expandafter\def\csname LTb\endcsname{\color{black}}\expandafter\def\csname LTa\endcsname{\color{black}}\expandafter\def\csname LT0\endcsname{\color{black}}\expandafter\def\csname LT1\endcsname{\color{black}}\expandafter\def\csname LT2\endcsname{\color{black}}\expandafter\def\csname LT3\endcsname{\color{black}}\expandafter\def\csname LT4\endcsname{\color{black}}\expandafter\def\csname LT5\endcsname{\color{black}}\expandafter\def\csname LT6\endcsname{\color{black}}\expandafter\def\csname LT7\endcsname{\color{black}}\expandafter\def\csname LT8\endcsname{\color{black}}\fi
  \fi
    \setlength{\unitlength}{0.0500bp}\ifx\gptboxheight\undefined \newlength{\gptboxheight}\newlength{\gptboxwidth}\newsavebox{\gptboxtext}\fi \setlength{\fboxrule}{0.5pt}\setlength{\fboxsep}{1pt}\definecolor{tbcol}{rgb}{1,1,1}\begin{picture}(7080.00,2820.00)\gplgaddtomacro\gplbacktext{\colorrgb{0.00,0.00,0.00}\put(818,307){\makebox(0,0){\strut{}$0$}}\colorrgb{0.00,0.00,0.00}\put(1962,307){\makebox(0,0){\strut{}$0.5$}}\colorrgb{0.00,0.00,0.00}\put(3105,307){\makebox(0,0){\strut{}$1$}}\colorrgb{0.00,0.00,0.00}\put(3273,969){\makebox(0,0)[l]{\strut{}$m$}}\colorrgb{0.00,0.00,0.00}\put(3273,2341){\makebox(0,0)[l]{\strut{}$m^2$}}}\gplgaddtomacro\gplfronttext{\csname LTb\endcsname \put(4259,2646){\makebox(0,0)[l]{\strut{}exact discretization}}\csname LTb\endcsname \put(4259,2339){\makebox(0,0)[l]{\strut{}Christoffel sampling}}\csname LTb\endcsname \put(4259,2031){\makebox(0,0)[l]{\strut{}BSS frame subsampling}}\csname LTb\endcsname \put(4259,1724){\makebox(0,0)[l]{\strut{}maximal ETF construction}}\csname LTb\endcsname \put(4259,1417){\makebox(0,0)[l]{\strut{}Singer ETF construction}}\csname LTb\endcsname \put(4259,1110){\makebox(0,0)[l]{\strut{}graph construction}}\csname LTb\endcsname \put(4259,803){\makebox(0,0)[l]{\strut{}dimension argument}}\csname LTb\endcsname \put(463,1655){\rotatebox{-270.00}{\makebox(0,0){\strut{}$n$}}}\csname LTb\endcsname \put(1961,0){\makebox(0,0){\strut{}$\varepsilon$}}}\gplbacktext
    \put(0,0){\includegraphics[width={354.00bp},height={141.00bp}]{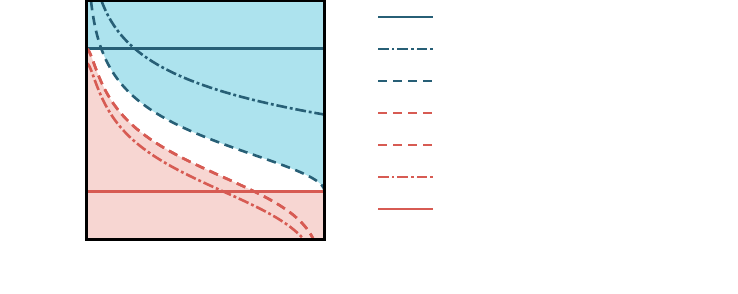}}\gplfronttext
  \end{picture}\endgroup
     \caption{Upper and lower bounds for the number of points in an $L_2$ MZ inequality as a function of the distortion, for $m=998$. The complete-graph, Singer, and maximal ETF lower bounds are compared with the general upper bounds. The Singer and maximal ETF constructions are visually indistinguishable and the $y$-axis is logarithmic.}\label{fig:eps}
\end{figure}

With MZ inequalities being used in function approximation, the results have immediate consequences for least-squares methods.
The number of points needed for the hard-to-discretize spaces is reflected in the condition number of the weighted least-squares matrix.
Let $n_{\rm iter}$ denote the iteration count certified by the standard conjugate-gradient condition-number estimate for a relative target error $0<\delta<1$.
For the hard spaces and $m\le n\le N/2$, every choice of $n$ points and positive weights satisfies
\begin{equation*}
    n_{\rm iter}
    \ge \left\lceil\frac{2\log(\delta/2)}{\log(m/(24n))}\right\rceil,
\end{equation*}
whereas, for every $m$-dimensional space and every integer $n>m$, the constructive subsampling result of \cite{BSU23} gives at most $n$ points for which
\begin{equation*}
    n_{\rm iter}
    \le \left\lceil\frac{2\log(\delta/2)}{\log(m/n)}\right\rceil.
\end{equation*}
These statements concern the standard a priori estimate, see \eqref{eq:iterationbound} later; actual LSQR convergence may be faster because it depends on the full spectrum and on the initial error.
The certified iteration saving is only logarithmic in the oversampling ratio, and therefore large oversampling is not justified solely by this saving when matrix-vector products, function evaluations, or point construction become more expensive with $n$.

This paper is organized as follows.
We establish the connection between $L_2$ MZ inequalities and UNTFs in Section~\ref{sec:untf} and introduce the trace-variance approach.
In Section~\ref{sec:etf}, we investigate the special case of ETFs and prove that they provide the sharpest bounds within the trace-variance approach.
We continue with the complete-graph edge frame in Section~\ref{sec:graph}, which exists for every dimension, and establish the connection between $L_2$ MZ inequalities and spectral graph sparsification.
We conclude with an application in Section~\ref{sec:lsqr}, where we analyze the consequences for the arithmetic cost of the least-squares method for function approximation.

\subsection*{Notation.}
$\preceq$ denotes the Loewner order and
$\|\cdot\|_{\rm HS}$ the Hilbert--Schmidt norm.
 \section{Matrix formulation and the trace-variance obstruction}\label{sec:untf}

\subsection{MZ inequalities in terms of Gram matrices}

Let $\{\eta_1,\dots,\eta_m\}$ be an orthonormal basis of $\mathcal F$.
Using the evaluation vector
\begin{equation*}
    \bm \eta(x) = [\eta_1(x), \dots, \eta_m(x)]^\ast \in\mathds C^m,
\end{equation*}
every $f\in\mathcal F$ can be represented as $f_{\bm c} = \sum_{k=1}^{m} c_k \eta_k$, with $\|f_{\bm c}\|_{L_2} = \|\bm c\|_2$, and $f_{\bm c}(x) = \bm\eta(x)^\ast\bm c$.
For a set of points and weights, the discrete Gram matrix is given by
\begin{equation}\label{eq:M}
    \bm M \coloneqq \sum_{i=1}^{n}w_i\bm\eta(x_i)\bm\eta(x_i)^\ast.
\end{equation}
Then $\sum_{i=1}^{n}w_i|f_{\bm c}(x_i)|^2 = \bm c^\ast\bm M\bm c$, so \eqref{eq:mz} is equivalent to
\begin{equation*}
    (1-\varepsilon)\Id_m
    \preceq \bm M
    \preceq (1+\varepsilon)\Id_m.
\end{equation*}
This elementary reformulation will be used repeatedly.
The rank of $\bm M$ is at most $n$, which gives the universal lower bound $n\ge m$ for $\varepsilon<1$.
To obtain non-trivial lower bounds, we must exploit the geometry of the rank-one matrices $\bm\eta(x)\bm\eta(x)^\ast$.

\subsection{Finite function spaces generated by tight frames}

Let $\mathds F\in\{\mathds R,\mathds C\}$ and $\mathcal H\subseteq\mathds F^M$ be an $m$-dimensional subspace over $\mathds F$.
A family $\Phi = \{\bm\varphi_1,\dots,\bm\varphi_N\}\subset\mathcal H$ is called a \emph{unit-norm tight frame} (UNTF) if $\|\bm\varphi_1\|_2 = \dots = \|\bm\varphi_N\|_2 = 1$ and
\begin{equation}
    \sum_{i=1}^{N} | \langle \bm a,\bm\varphi_i\rangle|^2
    = \frac Nm \| \bm a\|_2^2
    \quad\text{for all } \bm a\in\mathcal H .
\end{equation}
The frame bound is forced by taking the trace of the frame operator.

\begin{proposition}[Tight frames as function spaces]\label{framemz}
    Let $\Phi\subset\mathcal H$ be a UNTF for $\mathcal H\subseteq\mathds F^M$ an $m$-dimensional subspace.
    On the finite set $\Omega=\{1,\dots,N\}$ equipped with the counting measure, we define
    \begin{equation}\label{eq:framespace}
        \mathcal F_\Phi
        :=\Big\{
        f_{\bm a}(i)=\sqrt{\frac mN}\,
        \langle\bm a,\bm\varphi_i\rangle:
        \bm a\in\mathcal H
        \Big\}.
    \end{equation}
    Then $\mathcal F_\Phi$ is an $m$-dimensional subspace of $L_2$ and the map $\bm a\mapsto f_{\bm a}$ is an isometry.
    Moreover, points $J\subseteq\{1,\dots,N\}$ and weights $w_i>0$ for $i\in J$ satisfy \eqref{eq:mz} on $\mathcal F_\Phi$ if and only if
    \begin{equation}\label{eq:framesub}
        \frac Nm(1-\varepsilon)\|\bm a\|_2^2
        \le \sum_{i\in J}w_i|\langle\bm a,\bm\varphi_i\rangle|^2
        \le \frac Nm(1+\varepsilon)\|\bm a\|_2^2
        \quad\text{for all }\bm a\in\mathcal H,
    \end{equation}
    i.e., the family $\{\sqrt{w_i}\bm\varphi_i\}_{i\in J}$ forms a frame.
\end{proposition}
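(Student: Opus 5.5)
The proof is a direct computation from the definition of a UNTF, in three steps.

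\emph{Isometry.} For $\bm a\in\mathcal H$ we compute
\begin{equation*}
    \|f_{\bm a}\|_{L_2}^2=\sum_{i=1}^N \frac mN|\langle\bm a,\bm\varphi_i\rangle|^2 .
\end{equation*}
The tight frame identity turns the right-hand side into $\frac mN\cdot\frac Nm\|\bm a\|_2^2=\|\bm a\|_2^2$.

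\emph{Dimension.} The map $\bm a\mapsto f_{\bm a}$ is linear over $\mathds F$, because the inner product is linear in its first slot. Being an isometry, it is injective, so its image $\mathcal F_\Phi$ has dimension $\dim\mathcal H=m$. In the real case ($\mathds F=\mathds R$), one either reads $\mathcal F_\Phi$ as a real function space or takes its complex span. For the latter, complexify $\mathcal H$ to $\mathcal H+i\mathcal H\subseteq\mathds C^M$. The frame is still tight there, since $|\langle\bm a+i\bm b,\bm\varphi\rangle|^2=\langle\bm a,\bm\varphi\rangle^2+\langle\bm b,\bm\varphi\rangle^2$ for real $\bm\varphi$. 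Hence the complex span is again $m$-dimensional and isometric. I expect this real/complex bookkeeping to be the only delicate point, and it is minor.

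Point evaluations are well defined because $\Omega$ is finite with counting measure. Every element of $L_2(\Omega)$ is an honest function, and $\|f\|_{L_2}^2=\sum_i|f(i)|^2$.

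\emph{Equivalence.} For $J\subseteq\Omega$ and weights $w_i>0$, and for $f=f_{\bm a}$, we have
\begin{equation*}
    \sum_{i\in J}w_i|f_{\bm a}(i)|^2=\frac mN\sum_{i\in J}w_i|\langle\bm a,\bm\varphi_i\rangle|^2 .
\end{equation*}
Also $\|f_{\bm a}\|_{L_2}^2=\|\bm a\|_2^2$ by the isometry step. Every $f\in\mathcal F_\Phi$ has the form $f_{\bm a}$ for some $\bm a\in\mathcal H$. Substituting into \eqref{eq:mz} and multiplying through by $N/m$ gives exactly \eqref{eq:framesub}, and each step is reversible. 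Since $\varepsilon<1$, the lower bound in \eqref{eq:framesub} is positive, so \eqref{eq:framesub} says precisely that $\{\sqrt{w_i}\bm\varphi_i\}_{i\in J}$ is a frame for $\mathcal H$, with bounds $\frac Nm(1\pm\varepsilon)$.
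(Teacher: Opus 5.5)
Your proposal is correct and follows the paper's proof: the tight-frame identity gives the isometry, and the weighted sum equals $\frac mN\sum_{i\in J}w_i|\langle\bm a,\bm\varphi_i\rangle|^2$, so \eqref{eq:mz} and \eqref{eq:framesub} differ only by the positive factor $N/m$. Your remarks that injectivity of the isometry gives dimension $m$, and that a real $\mathcal H$ can be complexified while preserving tightness, are correct details that the paper leaves implicit.
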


\begin{proof}
    Tightness of the frame gives for all $\bm a\in\mathcal H$
    \begin{equation*}
        \|f_{\bm a}\|_{L_2}^2
        = \sum_{i=1}^N|f_{\bm a}(i)|^2
        =\frac mN\sum_{i=1}^N
        |\langle\bm a,\bm\varphi_i\rangle|^2
        =\|\bm a\|_{2}^2.
    \end{equation*}
    For the weighted sum we obtain
    \begin{equation*}
        \sum_{i\in J}w_i|f(i)|^2
        = \frac mN \sum_{i\in J}w_i|\langle\bm a,\bm\varphi_i\rangle|^2 .
    \end{equation*}
    Since the quantities involved are the same, the assertion follows.
\end{proof}

Thus $L_2$ MZ inequalities for $\mathcal F_\Phi$ correspond to subframes of $\Phi$.
For the function space $\mathcal F_\Phi$, the tightness of the frame corresponds to the orthonormality of a basis.
The unit-norm property translates to the Christoffel function being constant.
Indeed, for $\{\bm e_k\}_{k=1}^{m}$ an orthonormal basis of $\mathcal H$, we have for $i\in\Omega$
\begin{equation*}
    \sum_{k=1}^{m} |f_{\bm e_k}(i)|^2
    = \frac mN \|\bm\varphi_i\|_2^2
    = \frac mN.
\end{equation*}
A constant Christoffel function is not assumed for general function spaces, but it removes variation in the norms of the individual evaluation vectors.

\subsection{Centered projectors and trace variance}

To facilitate computations with UNTFs, we introduce \emph{projectors}
\begin{equation*}
    P_i\colon\mathcal H\to\mathcal H,\quad \bm a\mapsto\langle \bm a,\bm\varphi_i\rangle\bm\varphi_i .
\end{equation*}
Tightness of the initial frame $\Phi$ becomes equivalent to $\sum_{i=1}^{N} P_i = \frac Nm \, \Id_{\mathcal H}$.
Further, the frame operator of the subframe $\{\sqrt{w_i}\bm\varphi_i\}_{i\in J}$ becomes
\begin{equation}\label{eq:subframeop}
    S_{J,w} = \sum_{i\in J}w_iP_i .
\end{equation}
Centering the projectors yields traceless self-adjoint operators on $\mathcal H$
\begin{equation}\label{eq:Q}
    Q_i \coloneqq P_i - \frac 1m {\rm Id}_{\mathcal H},
\end{equation}
which span a real Hilbert space with the Hilbert--Schmidt inner product.
Their Hilbert--Schmidt Gram matrix is
\begin{equation}\label{eq:gram}
    \bm G_\Phi = \Big[\trace(Q_iQ_j)\Big]_{i,j=1}^{N} \in\mathds R^{N\times N} .
\end{equation}
We have $\sum_{i=1}^{N} Q_i = 0$, and hence $\bm G_\Phi\bm 1_N = \bm 0_N$.
A central quantity will be the smallest eigenvalue in the orthogonal complement of the eigenvector $\bm 1_N$, which we denote by
\begin{equation}\label{eq:gamma}
    \gamma_\Phi\coloneqq\lambda_{\min}(\bm G_\Phi |_{\bm 1_N^\perp}) .
\end{equation}
The condition $\gamma_\Phi>0$ says that the only linear dependence among the centered projectors $Q_i$ is their zero sum.

\begin{lemma}\label{tracevariance}
    Let $\Phi\subset\mathcal H\subset\mathds F^M$ be a UNTF with $\gamma_\Phi>0$ from \eqref{eq:gamma}.
    Further, let $J\subseteq\{1,\dots,N\}$ be non-empty and $w_i>0$ for $i\in J$.
    For the subsampled frame operator $S_{J,w}$ from \eqref{eq:subframeop} we have
    \begin{equation*}
        \frac{\|S_{J,w}\|_{\rm HS}^2}{(\trace S_{J,w})^2} 
        \ge \frac 1m + \gamma_\Phi\Big(\frac{1}{|J|}-\frac 1N\Big) .
    \end{equation*}
\end{lemma}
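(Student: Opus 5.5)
Write $S=S_{J,w}=\sum_{i\in J}w_iP_i$ and decompose each projector as $P_i=Q_i+\frac1m\Id_{\mathcal H}$. Then
\begin{equation*}
    S = \frac{W}{m}\Id_{\mathcal H} + \sum_{i\in J} w_i Q_i ,\qquad W\coloneqq\sum_{i\in J}w_i .
\end{equation*}
Since each $Q_i$ is traceless, $\trace S = W$. The scalar part and the traceless part are Hilbert--Schmidt orthogonal, so
\begin{equation*}
    \|S\|_{\rm HS}^2 = \frac{W^2}{m} + \Big\|\sum_{i\in J}w_iQ_i\Big\|_{\rm HS}^2 = \frac{W^2}{m} + \bm w^\top\bm G_\Phi\bm w ,
\end{equation*}
where $\bm w\in\mathds R^N$ is the weight vector extended by zero outside $J$. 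Here $\bm G_\Phi$ from \eqref{eq:gram} is real symmetric: $\trace(Q_iQ_j)$ is real because both operators are self-adjoint. Dividing by $W^2$, the claim reduces to
\begin{equation*}
    \bm w^\top\bm G_\Phi\bm w \ge \gamma_\Phi W^2\Big(\frac1{|J|}-\frac1N\Big).
\end{equation*}

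To prove this, project $\bm w$ onto $\bm 1_N^\perp$. Put $\bm u=\bm w-\frac WN\bm 1_N$. Since $\bm G_\Phi\bm 1_N=\bm 0_N$ and $\bm G_\Phi$ is symmetric, $\bm w^\top\bm G_\Phi\bm w=\bm u^\top\bm G_\Phi\bm u$. By the definition \eqref{eq:gamma} of $\gamma_\Phi$, this is at least $\gamma_\Phi\|\bm u\|_2^2$. It therefore suffices to show
\begin{equation*}
    \|\bm u\|_2^2=\|\bm w\|_2^2-\frac{W^2}{N}\ge W^2\Big(\frac1{|J|}-\frac1N\Big).
\end{equation*}
This is Cauchy--Schwarz on the support: $W^2=(\sum_{i\in J}w_i)^2\le |J|\sum_{i\in J}w_i^2=|J|\,\|\bm w\|_2^2$.

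The argument is essentially bookkeeping. The steps that need care are the orthogonality $\trace(Q_i\Id)=0$, which is what makes the cross term vanish, and using $\bm G_\Phi\bm 1_N=0$ to pass to $\bm 1_N^\perp$ before invoking the eigenvalue bound. The positivity of the weights is not even needed beyond $W\neq0$, which guarantees the quotient is defined.
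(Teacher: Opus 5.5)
Your proof is correct and follows the paper's argument step for step. It uses the Hilbert--Schmidt-orthogonal splitting $S_{J,w}=\frac{t}{m}\Id_{\mathcal H}+\sum_i w_iQ_i$, passes to $\bm w-\frac tN\bm 1_N$ via $\bm G_\Phi\bm 1_N=\bm 0_N$ to invoke $\gamma_\Phi$, and finishes with Cauchy--Schwarz on the support. Your closing remark is also right: the argument only needs real weights supported on $J$ with nonzero sum.
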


\begin{proof}
    We extend $w$ by zero outside of $J$ and define $\bm w = (w_1,\dots,w_N)^\top$.
    Since each $P_i$ has trace one, we obtain
    \begin{equation*}
        t 
        \coloneqq \trace S_{J,w}
        = \sum_{i=1}^{N} w_i \trace(P_i)
        = \sum_{i=1}^{N} w_i \| \bm\varphi_i \|_2^2
        = \|\bm w\|_1 .
    \end{equation*}
    The identity ${\rm Id}_{\mathcal H}$ is Hilbert--Schmidt orthogonal to the centered projectors $Q_i$, and therefore
    \begin{equation}\label{eq:lkfdsoj}
        \|S_{J,w}\|_{\rm HS}^2
        = \Big\| \frac tm {\rm Id}_{\mathcal H} + \sum_{i=1}^{N} w_i Q_i \Big\|_{\rm HS}^2
        = \frac{t^2}{m} + \bm w^\top\bm G_\Phi\bm w .
    \end{equation}
    Using $\bm G_\Phi\bm 1_N = \bm 0$ and the definition of $\gamma_\Phi$, we obtain
    \begin{align*}
        \bm w^\top\bm G_\Phi\bm w
        &= \Big(\bm w-\frac tN\bm 1_N\Big)^\top \bm G_\Phi \Big(\bm w-\frac tN\bm 1_N\Big) \\
        &\ge \gamma_\Phi \Big(\|\bm w\|_2^2-\frac{t^2}{N}\Big) .
    \end{align*}
    Applying Cauchy--Schwarz on the $|J|$ nonzero entries gives $\|\bm w\|_2^2 \ge t^2/|J|$.
    Substituting this into \eqref{eq:lkfdsoj} and dividing by $t^2$ proves the assertion.
\end{proof}

Note that, among positive semidefinite operators, the quantity $\|\cdot\|_{\rm HS}^2/(\trace(\cdot))^2$ equals $m^{-1}$ exactly for scalar multiples of the identity.
Lemma~\ref{tracevariance} says that a small support $|J|$ forces an increase of this fraction because the centered projectors $Q_i$ cannot cancel sufficiently well.

\begin{lemma}[Kantorovich]\label{kantorovich}
    Let $0\le\varepsilon<1$ and let $T$ be positive definite on an $m$-dimensional Hilbert space with
    \begin{equation*}
        (1-\varepsilon)\Id
        \preceq T
        \preceq (1+\varepsilon)\Id.
    \end{equation*}
    Then
    \begin{equation*}
        \frac{\|T\|_{\rm HS}^2}{(\trace T)^2}
        \le \frac{1}{m(1-\varepsilon^2)} .
    \end{equation*}
\end{lemma}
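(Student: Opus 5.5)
We diagonalize $T$: let $\lambda_1,\dots,\lambda_m\in[1-\varepsilon,1+\varepsilon]$ be its eigenvalues. Then $\|T\|_{\rm HS}^2=\sum_k\lambda_k^2$ and $\trace T=\sum_k\lambda_k$, so the claim reduces to the scalar inequality
\begin{equation*}
    m(1-\varepsilon^2)\sum_{k=1}^m\lambda_k^2\le\Big(\sum_{k=1}^m\lambda_k\Big)^2 .
\end{equation*}
This is a reverse Cauchy--Schwarz (Pólya--Szegő/Kantorovich type) inequality for vectors whose entries lie in a fixed interval.

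The key pointwise fact comes from $\lambda_k\in[1-\varepsilon,1+\varepsilon]$, which gives $(\lambda_k-(1-\varepsilon))((1+\varepsilon)-\lambda_k)\ge0$. Expanding yields
\begin{equation*}
    \lambda_k^2\le 2\lambda_k-(1-\varepsilon^2).
\end{equation*}
Summing over $k$ and writing $s=\sum_k\lambda_k$ gives $\sum_k\lambda_k^2\le 2s-m(1-\varepsilon^2)$.

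It then suffices to show $m(1-\varepsilon^2)\big(2s-m(1-\varepsilon^2)\big)\le s^2$. Setting $a=m(1-\varepsilon^2)$, this reads $2as-a^2\le s^2$, which is the same as $(s-a)^2\ge0$. Combining the two steps gives $m(1-\varepsilon^2)\|T\|_{\rm HS}^2\le(\trace T)^2$. Dividing by $m(1-\varepsilon^2)(\trace T)^2>0$, which is positive since $\varepsilon<1$ and $T$ is positive definite, proves the lemma.

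No step is a serious obstacle. The only design choice is to bound $\sum_k\lambda_k^2$ linearly in $s$ via the interval constraint, rather than invoking the general Kantorovich inequality with extreme eigenvalues $1\pm\varepsilon$. The linear bound sidesteps any normalization issue and keeps the argument self-contained.
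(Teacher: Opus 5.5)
Your proof is correct and follows the same route as the paper. Both arguments reduce to the eigenvalues and apply a Kantorovich-type reverse Cauchy--Schwarz bound, which the paper cites with $\alpha_k=\lambda_k$, $A=1-\varepsilon$, $B=1+\varepsilon$. Your two steps, summing $(\lambda_k-A)(B-\lambda_k)\ge0$ and then using $(s-a)^2\ge0$ in place of AM--GM, are the standard proof of that cited inequality specialized to this case, so your version is the same argument made self-contained.
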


\begin{proof}
    Let $\lambda_1,\dots,\lambda_m$ be the eigenvalues of $T$.
    The Kantorovich inequality, cf.~\cite[Lemma on page~142]{Kan48}, states
    \begin{equation*}
        \sum_{k=1}^{m} \alpha_k \lambda_k
        \sum_{k=1}^{m} \alpha_k \lambda_k^{-1}
        \le \frac{(A+B)^2}{4AB} \Big(\sum_{k=1}^m \alpha_k\Big)^2
    \end{equation*}
    for nonnegative $\alpha_k$ and $0<A\le \lambda_k \le B$, $k=1,\dots,m$.
    We set $A = 1-\varepsilon$, $B = 1+\varepsilon$, and $\alpha_k = \lambda_k$.
    The three sums become
    \begin{equation*}
        \sum_{k=1}^m \alpha_k\lambda_k=\|T\|_{\rm HS}^2,
        \quad
        \sum_{k=1}^m \alpha_k\lambda_k^{-1}=m,
        \quad\text{and}\quad
        \sum_{k=1}^m \alpha_k=\trace T,
    \end{equation*}
    and the assertion follows.
\end{proof}

\begin{theorem}[General UNTF lower bound]\label{untfbound}
    Let $\Phi = \{\bm\varphi_1,\dots,\bm\varphi_N\}\subset\mathcal H\subseteq\mathds F^M$ be a UNTF in an $m$-dimensional subspace $\mathcal H$ with $\gamma_\Phi>0$ from \eqref{eq:gamma}.
    Suppose $J\subseteq\{1,\dots,N\}$ and $w_i>0$ for $i\in J$ satisfy
    \begin{equation*}
        (1-\varepsilon)\|\bm a\|_2^2
        \le \sum_{i\in J}w_i|\langle\bm a,\bm\varphi_i\rangle|^2
        \le (1+\varepsilon)\|\bm a\|_2^2
        \quad\text{for all }\bm a\in\mathcal H
    \end{equation*}
    for some $0\le\varepsilon<1$.
    Then
    \begin{equation*}
        |J| \ge \max\Big\{m, \frac{m\gamma_\Phi N(1-\varepsilon^2)}{m\gamma_\Phi(1-\varepsilon^2)+N\varepsilon^2} \Big\} .
    \end{equation*}
\end{theorem}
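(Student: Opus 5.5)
The plan is to sandwich the ratio $\|S_{J,w}\|_{\rm HS}^2/(\trace S_{J,w})^2$ between the lower bound of Lemma~\ref{tracevariance} and the upper bound of Lemma~\ref{kantorovich}, and then solve the resulting inequality for $|J|$. The bound $|J|\ge m$ is handled separately by a rank argument. Since $\varepsilon<1$, the hypothesis gives $S_{J,w}\succeq(1-\varepsilon)\Id_{\mathcal H}\succ 0$. Hence $S_{J,w}=\sum_{i\in J}w_iP_i$ has rank $m$, while it is a sum of $|J|$ rank-one operators, so $|J|\ge m$.

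For the second bound, note that the hypothesis in the theorem is exactly $(1-\varepsilon)\Id\preceq S_{J,w}\preceq(1+\varepsilon)\Id$ on the $m$-dimensional space $\mathcal H$. (If one prefers the normalization of \eqref{eq:framesub}, it differs only by the scalar $N/m$, and the ratio is scale invariant.) Lemma~\ref{kantorovich} then gives the upper bound $1/(m(1-\varepsilon^2))$, and Lemma~\ref{tracevariance} gives the lower bound. Combining them,
\begin{equation*}
    \frac 1m+\gamma_\Phi\Big(\frac1{|J|}-\frac1N\Big)\le\frac{1}{m(1-\varepsilon^2)},
    \quad\text{i.e.}\quad
    \gamma_\Phi\Big(\frac1{|J|}-\frac1N\Big)\le\frac{\varepsilon^2}{m(1-\varepsilon^2)} .
\end{equation*}

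Rearranging and using $\gamma_\Phi>0$ gives
\begin{equation*}
    \frac1{|J|}\le\frac1N+\frac{\varepsilon^2}{m\gamma_\Phi(1-\varepsilon^2)}=\frac{m\gamma_\Phi(1-\varepsilon^2)+N\varepsilon^2}{Nm\gamma_\Phi(1-\varepsilon^2)} .
\end{equation*}
Inverting yields the claimed bound $|J|\ge m\gamma_\Phi N(1-\varepsilon^2)/(m\gamma_\Phi(1-\varepsilon^2)+N\varepsilon^2)$. This covers $\varepsilon=0$ as well, where the bound reads $|J|\ge N$.

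No step is a real obstacle, since both lemmas are already available. The only points requiring care are the following. First, the scale invariance of the ratio, so that the normalization of the MZ bounds does not matter. Second, positivity of all quantities when inverting the inequality. Third, confirming that Lemma~\ref{kantorovich} applies on $\mathcal H$, which is $m$-dimensional, rather than on $\mathds F^M$.
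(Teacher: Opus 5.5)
Your proposal is correct and matches the paper's proof: you use the rank argument for $|J|\ge m$, then sandwich $\|S_{J,w}\|_{\rm HS}^2/(\trace S_{J,w})^2$ between Lemma~\ref{tracevariance} and Lemma~\ref{kantorovich} and solve for $|J|$. Your algebra checks out, including the case $\varepsilon=0$, where the bound reduces to $|J|\ge N$.
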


\begin{proof}
    The frame condition can equivalently be expressed in terms of the subsampled frame operator $S_{J,w}$ from \eqref{eq:subframeop}
    \begin{equation*}
        (1-\varepsilon){\rm Id}_{\mathcal H}
        \preceq S_{J,w}
        \preceq (1+\varepsilon){\rm Id}_{\mathcal H} .
    \end{equation*}
    The rank of $|J|$ rank-one operators is at most $|J|$, whereas $S_{J,w}$ has rank $m$.
    Hence $|J|\ge m$.
    Combining Lemmas~\ref{tracevariance} and \ref{kantorovich} gives
    \begin{equation*}
        \frac 1m + \gamma_\Phi\Big(\frac{1}{|J|} - \frac 1N\Big)
        \le \frac{\|S_{J,w}\|_{\rm HS}^2}{(\trace S_{J,w})^2}
        \le \frac{1}{m(1-\varepsilon^2)} .
    \end{equation*}
    Solving for $|J|$ yields the second term in the assertion.
\end{proof}

For an $L_2$ MZ inequality on $\mathcal F_\Phi$, the frame inequality \eqref{eq:framesub} is obtained after replacing the MZ weights $w_i$ by $(m/N)w_i$.
This global rescaling does not change the support $|J|$.

\subsection{The exact complex endpoint}

The trace-variance method gives sharp order but does not by itself produce the exact value $m^2$ in every dimension for complex function spaces.
The case $\varepsilon=0$ follows from a different construction based on Hermitian matrices.

\begin{lemma}\label{lem:hermitian-projector-basis}
    For every $m\geq1$, there are $m^2$ vectors in $\mathds C^m$ whose rank-one projectors form a real basis of the Hermitian $m\times m$ matrices.
\end{lemma}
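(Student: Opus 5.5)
We will write down an explicit family of $m^2$ vectors and show that their rank-one projectors span the real vector space $\mathcal S_m$ of Hermitian $m\times m$ matrices. Since $\dim_{\mathds R}\mathcal S_m=m^2$, spanning is the same as being a basis. We denote the standard basis of $\mathds C^m$ by $\bm e_1,\dots,\bm e_m$. The family is
\begin{equation*}
    \bm e_k\ (1\le k\le m),\qquad \bm e_k+\bm e_l,\qquad \bm e_k+\mathrm i\,\bm e_l\qquad (1\le k<l\le m).
\end{equation*}
This gives $m+2\binom m2=m^2$ vectors, which is exactly the required count.

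We then compute the projectors. Write $E_{kl}=\bm e_k\bm e_l^\ast$. For the first type, $\bm e_k\bm e_k^\ast=E_{kk}$. For the second type,
\begin{equation*}
    (\bm e_k+\bm e_l)(\bm e_k+\bm e_l)^\ast=E_{kk}+E_{ll}+E_{kl}+E_{lk}.
\end{equation*}
For the third type,
\begin{equation*}
    (\bm e_k+\mathrm i\bm e_l)(\bm e_k+\mathrm i\bm e_l)^\ast=E_{kk}+E_{ll}-\mathrm i E_{kl}+\mathrm i E_{lk}.
\end{equation*}
We subtract the diagonal projectors, which are already in the real span. This shows that the real span contains $E_{kk}$, $E_{kl}+E_{lk}$ and $\mathrm i(E_{lk}-E_{kl})$ for all $k<l$.

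These three kinds of matrices form the standard real basis of $\mathcal S_m$. Indeed, a Hermitian matrix $H$ decomposes as
\begin{equation*}
    H=\sum_k H_{kk}E_{kk}+\sum_{k<l}\Big(\operatorname{Re}H_{kl}\,(E_{kl}+E_{lk})+\operatorname{Im}H_{kl}\,\mathrm i(E_{kl}-E_{lk})\Big),
\end{equation*}
where the $H_{kk}$ are real. Hence the $m^2$ projectors span $\mathcal S_m$ over $\mathds R$. Because there are exactly $m^2=\dim_{\mathds R}\mathcal S_m$ of them, they are linearly independent and form a basis.

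There is no real obstacle here. The only points to check are the dimension count $\dim_{\mathds R}\mathcal S_m=m^2$ (that is, $m$ real diagonal entries plus $2\binom m2$ real parameters above the diagonal) and the sign in the third projector. For $m=1$ only the single vector $\bm e_1$ appears, and the statement is trivial.
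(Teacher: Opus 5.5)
Your proof is correct and uses exactly the same family of $m^2$ vectors $\bm e_k$, $\bm e_k+\bm e_l$, $\bm e_k+\mathrm i\bm e_l$ as the paper. The only difference is which half of the dimension count you check. You show the projectors span the Hermitian matrices over $\mathds R$. The paper instead shows that a vanishing real combination has all coefficients zero: the off-diagonal entries force the coefficients of the second and third families to vanish, and then the diagonal forces the rest. Either half, combined with $\dim_{\mathds R}=m^2$, completes the argument.
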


\begin{proof}
    Let $\{\bm e_i\}_{i=1}^m$ be the standard basis and take
    \begin{equation}\label{eq:exact-vectors}
        \{\bm e_i:1\leq i\leq m\}
        \cup
        \{\bm e_i+\bm e_j:1\leq i<j\leq m\}
        \cup
        \{\bm e_i+\mathrm i\bm e_j:1\leq i<j\leq m\}.
    \end{equation}
    There are $m+2\binom m2=m^2$ vectors.
    In a real linear combination of their projectors, the $(i,j)$ entry with $i<j$ is of the form $\beta_{ij}-\mathrm i\gamma_{ij}$, where $\beta_{ij}$ and $\gamma_{ij}$ are the coefficients of the second and third families.
    If the combination vanishes, all $\beta_{ij}$ and $\gamma_{ij}$ vanish.
    The diagonal entries then force the coefficients of the projectors $\bm e_i\bm e_i^*$ to vanish.
    Hence the projectors are linearly independent, and their number equals the real dimension $m^2$ of the Hermitian matrices.
\end{proof}

\begin{theorem}[Complex endpoint]\label{thm:exact-endpoint}
    For every $m\geq1$ it holds $\mathcal N(m,0)=m^2$.
\end{theorem}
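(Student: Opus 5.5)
The upper bound $\mathcal N(m,0)\le m^2$ is the exact-discretization result \eqref{eq:exact-upper} from \cite{BKPSU25, ST25}, so only the lower bound needs work. For it I would build one $m$-dimensional complex function space in which every exact discretization uses at least $m^2$ distinct points. The plan is to sample uniformly from the $m^2$ vectors of Lemma~\ref{lem:hermitian-projector-basis}, exploiting that their projectors are linearly independent.

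\textbf{Construction.} Let $\bm v_1,\dots,\bm v_{m^2}\in\mathds C^m$ be the vectors from \eqref{eq:exact-vectors}. Set $V_i=\bm v_i\bm v_i^\ast$, and let $\Omega=\{1,\dots,m^2\}$. The measure $\nu$ must satisfy two requirements. First, $\sum_i \nu_i V_i$ should be positive definite, so that the space $\{i\mapsto \bm v_i^\ast\bm c\}$ with the $L_2(\nu)$ norm is a genuine $m$-dimensional Hilbert space. Second, every $\nu_i$ must be strictly positive. Simple positive weights do the job, for instance $\nu_i=1$ for all $i$: the standard basis vectors already give $\sum_i V_i\succeq\Id$.

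\textbf{Normalization.} Write $A=\sum_i\nu_iV_i\succ0$ and replace $\bm v_i$ by $\bm u_i=A^{-1/2}\bm v_i$. Then $\sum_i\nu_i\bm u_i\bm u_i^\ast=\Id$, and the functions $i\mapsto \bm u_i^\ast\bm c$ satisfy $\|f_{\bm c}\|_{L_2(\nu)}=\|\bm c\|_2$. The projectors $\bm u_i\bm u_i^\ast=A^{-1/2}V_iA^{-1/2}$ stay linearly independent over $\mathds R$, because congruence by an invertible matrix is a real-linear bijection of the Hermitian matrices. This is exactly the setting of \eqref{eq:M}, where the evaluation vector at the point $i$ is $\bm\eta(i)=\bm u_i$.

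\textbf{Rigidity.} Take points $J\subseteq\Omega$ and weights $w_i>0$ satisfying \eqref{eq:mz} with $\varepsilon=0$. Then $\sum_{i\in J}w_i\bm u_i\bm u_i^\ast=\Id=\sum_{i\in\Omega}\nu_i\bm u_i\bm u_i^\ast$. Extend $w$ by zero outside $J$. Subtracting the two identities gives $\sum_i(w_i-\nu_i)\bm u_i\bm u_i^\ast=0$, and linear independence forces $w_i=\nu_i$ for every $i$. In particular $w_i>0$ for all $i$, so $J=\Omega$ and $|J|=m^2$. Hence this space requires $m^2$ distinct points, which gives $\mathcal N(m,0)\ge m^2$.

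\textbf{Main obstacle.} The only real care lies in the normalization step: checking that $A\succ0$ and that linear independence survives the congruence. Both are routine. Once they hold, the rigidity argument is immediate, since positivity of $\nu$ on every point is built into the construction.
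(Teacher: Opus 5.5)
Your proof is correct and matches the paper's argument. It uses the same $m^2$ vectors from Lemma~\ref{lem:hermitian-projector-basis} with counting measure, notes that an exact formula forces $\sum_{i\in J}w_iV_i$ to equal the full sum, and concludes $J=\Omega$ from uniqueness of the real expansion in the independent projectors; your congruence by $A^{-1/2}$ is harmless but unnecessary, since the paper compares directly with $T=\sum_{\bm v\in\Omega}P_{\bm v}$ without orthonormalizing.
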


\begin{proof}
    Let $\Omega$ be the set of vectors in \eqref{eq:exact-vectors}, equipped with counting measure, and define
    \begin{equation*}
        \mathcal F\coloneqq\{f_{\bm a}:\Omega\to\mathds C:\ f_{\bm a}(\bm v)=\langle\bm a,\bm v\rangle,\ \bm a\in\mathds C^m\}.
    \end{equation*}
    This space has dimension $m$ because $\Omega$ contains the standard basis.
    Put $P_{\bm v}=\bm v\bm v^*$ and $T=\sum_{\bm v\in\Omega}P_{\bm v}$.
    An exact weighted formula supported on $J\subseteq\Omega$ would imply
    \begin{equation*}
        \sum_{\bm v\in J}w_{\bm v}P_{\bm v}=T
        =\sum_{\bm v\in\Omega}P_{\bm v}.
    \end{equation*}
    By Lemma~\ref{lem:hermitian-projector-basis}, the expansion in these projectors is unique.
    Therefore $J=\Omega$ and every weight equals one.
    Thus this space requires $m^2$ points.
    The matching upper bound is due to \cite{BKPSU25, ST25}.
\end{proof}

\begin{remark}[Real endpoint]\label{rem:real-endpoint}
    If one defines the analogous worst-case quantity for real function spaces, the exact value is $m(m+1)/2$.
    The upper bound follows from the dimension of the real quadratic product space, i.e., the dimension of real symmetric matrices.
    For the lower bound we may use the vectors $\bm e_i$ and $\bm e_i+\bm e_j$, whose projectors form a basis of the real symmetric matrices.
    The lower bound also follows from the complete graph construction in Theorem~\ref{spectralgraphbound}.
\end{remark}
 \section{ETF-based construction}\label{sec:etf}

A family of unit vectors $\Phi=\{\bm\varphi_1,\dots,\bm\varphi_N\}\subset\mathds C^m$ is \emph{equiangular} if $|\langle\bm\varphi_i,\bm\varphi_j\rangle|$ is constant for $i\ne j$.
A unit-norm equiangular tight frame is abbreviated \emph{ETF}.
The symmetry of their centered projectors makes ETFs natural candidates for Theorem~\ref{untfbound}.
Written in terms of their centered projectors $Q_i$ defined in \eqref{eq:Q}, we have
\begin{align}
    \langle Q_i,Q_j\rangle_{\rm HS}
    &= \trace(Q_i Q_j) \nonumber\\
    &= \trace(P_iP_j) - \frac 1m \trace P_i - \frac 1m \trace P_j + \frac{1}{m^2} \trace\Id_{\mathcal H} \nonumber\\
    &= |\langle\bm\varphi_i,\bm\varphi_j\rangle|^2 - \frac 1m \,, \label{eq:simplex}
\end{align}
which is constant for $i\ne j$ by the ETF property.
Together with $\sum_{i=1}^{N}Q_i = 0$, this implies that the centered projectors of an ETF form a regular simplex in the space of Hermitian matrices.

\subsection{General ETF lower bound}

For $N$ unit vectors spanning an $m$-dimensional space, the Welch bound bounds their \emph{coherence}
\begin{equation}\label{eq:welch}
    \mu^2
    :=\max_{i\ne j}
      |\langle\bm\varphi_i,\bm\varphi_j\rangle|^2
    \ge \frac{N-m}{m(N-1)}.
\end{equation}
Equality holds exactly for ETFs, see e.g., \cite[Theorem~1.10.1]{Christensen16}.

\begin{lemma}\label{etfcenteredgram}
    Let $\mathds F\in\{\mathds R,\mathds C\}$ and let $\Phi = \{\bm\varphi_1,\dots,\bm\varphi_N\}\subset\mathds F^m$ be a UNTF.
    Let $\bm G_\Phi$ be the Gram matrix of the centered projectors $Q_i$ from \eqref{eq:gram}.
    Then $\Phi$ is an ETF with coherence $\alpha$ if and only if
    \begin{equation*}
        \bm G_\Phi
        = (1-\alpha^2)
        \Big(\bm I_N-\frac1N\bm 1_N\bm 1_N^\top\Big).
    \end{equation*}
    In that case, $\gamma_\Phi$ from \eqref{eq:gamma} equals
    \begin{equation*}
        \gamma_\Phi = 1-\alpha^2 = \frac{N(m-1)}{m(N-1)}.
    \end{equation*}
\end{lemma}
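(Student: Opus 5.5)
We compute the Gram matrix entrywise from \eqref{eq:simplex}. Since $\|\bm\varphi_i\|_2=1$, the diagonal entries are $\trace(Q_i^2)=1-\frac1m$, and for $i\ne j$ we have $(\bm G_\Phi)_{ij}=|\langle\bm\varphi_i,\bm\varphi_j\rangle|^2-\frac1m$. Both implications, and the value of $\gamma_\Phi$, follow from this formula together with the relation $\bm G_\Phi\bm 1_N=\bm 0_N$.

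\emph{Forward direction.} Suppose $\Phi$ is an ETF with coherence $\alpha$, so that $|\langle\bm\varphi_i,\bm\varphi_j\rangle|^2=\alpha^2$ for all $i\ne j$. Then $\bm G_\Phi=a\bm I_N+b\bm 1_N\bm 1_N^\top$ with $b=\alpha^2-\frac1m$ and $a=1-\frac1m-b=1-\alpha^2$. The relation $\bm G_\Phi\bm 1_N=\bm 0$ gives $a+Nb=0$, hence $b=-a/N$, and this is the claimed form. Solving $a+Nb=0$, i.e.\ $1-\alpha^2+N(\alpha^2-\frac1m)=0$, yields $\alpha^2=\frac{N-m}{m(N-1)}$. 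This recovers the Welch equality \eqref{eq:welch}, and
\[
1-\alpha^2=\frac{N(m-1)}{m(N-1)}.
\]

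\emph{Converse.} Suppose $\bm G_\Phi=(1-\alpha^2)(\bm I_N-\frac1N\bm 1_N\bm 1_N^\top)$. Reading off the off-diagonal entries gives
\[
|\langle\bm\varphi_i,\bm\varphi_j\rangle|^2=\frac1m-\frac{1-\alpha^2}{N}\quad\text{for all } i\ne j,
\]
which is a constant. So $\Phi$ is equiangular, hence an ETF. Comparing the diagonal entries, $1-\frac1m=(1-\alpha^2)(1-\frac1N)$, forces $1-\alpha^2=\frac{N(m-1)}{m(N-1)}$. By the forward computation this is exactly the squared coherence of the ETF, so its coherence is indeed $\alpha$. (Here I read $\alpha$ as the nonnegative coherence.)

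\emph{Value of $\gamma_\Phi$.} The matrix $\bm I_N-\frac1N\bm 1_N\bm 1_N^\top$ is the orthogonal projector onto $\bm 1_N^\perp$. Its restriction to $\bm 1_N^\perp$ is therefore the identity, and by \eqref{eq:gamma} we get $\gamma_\Phi=1-\alpha^2$.

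There is no real obstacle here. The only points needing care are checking that the diagonal and off-diagonal relations are consistent, and making sure the converse actually pins down the coherence as $\alpha$.
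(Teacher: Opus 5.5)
Your proposal is correct and follows the paper's proof: both read off the entries of $\bm G_\Phi$ from \eqref{eq:simplex} and get the converse from the constancy of the off-diagonal entries. The only differences are that you derive $\alpha^2=\frac{N-m}{m(N-1)}$ from $\bm G_\Phi\bm 1_N=\bm 0_N$ (that is, from tightness) instead of quoting the equality case of the Welch bound, and that in the converse you use the diagonal entries to check explicitly that the coherence equals $\alpha$, which the paper leaves implicit.
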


\begin{proof}
    Suppose $\{\bm\varphi_1,\dots,\bm\varphi_N\}$ is an ETF.
    Because of \eqref{eq:simplex} and the equality in the Welch bound \eqref{eq:welch} we have
    \begin{equation*}
        [\bm G_\Phi]_{i,j}
        = \trace(Q_iQ_j)
        = \begin{cases}
            1-\frac 1m = (1-\alpha^2)\Big(1-\frac 1N\Big) & \text{for }i=j,\\
            \alpha^2-\frac 1m = -\frac{1-\alpha^2}{N} & \text{for }i\ne j.
        \end{cases}
    \end{equation*}
    This proves that $\bm G_\Phi$ has the stated form.
    The value for $\gamma_\Phi$ follows from the equality case of the Welch bound \eqref{eq:welch} for ETFs.

    The equiangularity follows from the off-diagonal entries of $\bm G_\Phi$ being constant and $\trace(Q_iQ_j) = |\langle\bm\varphi_i,\bm\varphi_j\rangle|^2-\frac1m$.
\end{proof}

\begin{theorem}\label{etfbound}
    Let $\Phi=\{\bm\varphi_1,\dots,\bm\varphi_N\}\subset\mathds C^m$ be an ETF.
    Any set of $n$ points and positive weights satisfying \eqref{eq:mz} on the $m$-dimensional space $\mathcal F_{\Phi}$ from \eqref{eq:framespace} with distortion $0\le \varepsilon<1$ obeys
    \begin{equation*}
        n\ge
        \max\Big\{
        m,
        \frac{N(1-\varepsilon^2)}
             {1+\frac{N-m}{m-1}\varepsilon^2}
        \Big\}.
    \end{equation*}
\end{theorem}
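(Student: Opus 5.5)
The proof combines Proposition~\ref{framemz}, Theorem~\ref{untfbound} and Lemma~\ref{etfcenteredgram}.

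Suppose $n$ points $J\subseteq\{1,\dots,N\}$ with weights $w_i>0$ satisfy \eqref{eq:mz} on $\mathcal F_\Phi$ with distortion $\varepsilon$.

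\emph{Step 1 (transfer to frames).} By Proposition~\ref{framemz}, this is equivalent to the subframe inequality \eqref{eq:framesub}. Replacing $w_i$ by $(m/N)w_i$, as in the remark after Theorem~\ref{untfbound}, turns it into the normalized frame condition used there. The rescaling leaves $|J|=n$ unchanged. The lower bound $n\ge m$ then follows from the rank argument already contained in Theorem~\ref{untfbound}.

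\emph{Step 2 (apply the UNTF bound).} An ETF is in particular a UNTF in $\mathcal H=\mathds C^m$. By Lemma~\ref{etfcenteredgram},
\begin{equation*}
    \gamma_\Phi=\frac{N(m-1)}{m(N-1)} .
\end{equation*}
We should check $\gamma_\Phi>0$. This holds for $m\ge2$. The degenerate case $m=1$ should be noted separately: there the second term reads $N(1-\varepsilon^2)/(1+\infty)$ or is otherwise trivial, and only $n\ge m$ is meaningful. If $N=1$, the formula for $\gamma_\Phi$ is not needed, since $N=m=1$.

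Substituting into Theorem~\ref{untfbound} gives
\begin{equation*}
    n\ge \frac{m\gamma_\Phi N(1-\varepsilon^2)}{m\gamma_\Phi(1-\varepsilon^2)+N\varepsilon^2} .
\end{equation*}

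\emph{Step 3 (simplify).} We have $m\gamma_\Phi=N(m-1)/(N-1)$. Dividing numerator and denominator by $m\gamma_\Phi$ gives
\begin{equation*}
    n\ge \frac{N(1-\varepsilon^2)}{1-\varepsilon^2+\frac{(N-1)\varepsilon^2}{m-1}} .
\end{equation*}
In the denominator,
\begin{equation*}
    1-\varepsilon^2+\frac{(N-1)\varepsilon^2}{m-1}
    =1+\varepsilon^2\,\frac{N-1-(m-1)}{m-1}
    =1+\frac{N-m}{m-1}\varepsilon^2 ,
\end{equation*}
which is exactly the stated expression.

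The only potential obstacle is bookkeeping: the normalization constant between the MZ weights and the frame weights, and the edge cases $m=1$ or $N=m$. When $N=m$ the bound reduces to $m(1-\varepsilon^2)\le m$, which is consistent with the claimed maximum. Otherwise the argument is a direct substitution.
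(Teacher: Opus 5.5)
Your proof is correct and follows the paper's argument. You insert $\gamma_\Phi=\frac{N(m-1)}{m(N-1)}$ from Lemma~\ref{etfcenteredgram} into Theorem~\ref{untfbound}, after rescaling the weights by $m/N$, and simplify; your remark that the statement tacitly needs $m\ge2$ (for $m=1$ one has $\gamma_\Phi=0$ and $\frac{N-m}{m-1}$ is undefined) is a fair point the paper leaves implicit.
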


\begin{proof}
    By Lemma~\ref{etfcenteredgram} we have $\gamma_\Phi = \frac{N(m-1)}{m(N-1)}$.
    Plugging this in Theorem~\ref{untfbound} and simplifying yields the assertion.
\end{proof}

The next result states the extremal role of ETFs within the trace-variance approach.

\begin{theorem}\label{simplexextremality}
    Let $\Phi = \{\bm\varphi_1,\dots,\bm\varphi_N\}\subset\mathcal H\subseteq\mathds F^M$ be a UNTF spanning an $m$-dimensional subspace $\mathcal H$, with $\gamma_\Phi>0$ from \eqref{eq:gamma}.
    Then
    \begin{equation*}
        N\le \begin{cases}m(m+1)/2 & \text{for }\mathds F=\mathds R\,,\\ m^2&\text{for }\mathds F=\mathds C\end{cases}
    \end{equation*}
    and
    \begin{equation*}
        \gamma_\Phi
        \le \frac{N(m-1)}{m(N-1)}.
    \end{equation*}
    Equality holds if and only if $\Phi$ is an ETF.
\end{theorem}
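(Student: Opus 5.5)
The plan is to read everything off the spectrum of $\bm G_\Phi$ together with its trace. The operators $Q_i$ are traceless self-adjoint on $\mathcal H$, and they span a real space of dimension $D-1$. Here $D=\dim_{\mathds R}$ of the self-adjoint operators on $\mathcal H$, so $D=m(m+1)/2$ for $\mathds F=\mathds R$ and $D=m^2$ for $\mathds F=\mathds C$.

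\textbf{Step 1 (the bound on $N$).} The condition $\gamma_\Phi>0$ says that $\bm G_\Phi$ is positive definite on $\bm 1_N^\perp$. Hence $\operatorname{rank}\bm G_\Phi=N-1$. Since $\bm G_\Phi$ is a Gram matrix, its rank equals the dimension of $\Span_{\mathds R}\{Q_i\}$. This span sits inside the traceless self-adjoint operators, whose dimension is $D-1$. Therefore $N-1\le D-1$, which is exactly the first claim.

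\textbf{Step 2 (the bound on $\gamma_\Phi$).} I would use a trace argument. The matrix $\bm G_\Phi$ has $N-1$ eigenvalues on $\bm 1_N^\perp$, each at least $\gamma_\Phi$, and eigenvalue $0$ on $\bm 1_N$. This gives
\[
(N-1)\gamma_\Phi\le \trace\bm G_\Phi=\sum_{i=1}^N\trace(Q_i^2)=N\Big(1-\frac1m\Big),
\]
using $\trace(Q_i^2)=|\langle\bm\varphi_i,\bm\varphi_i\rangle|^2-\frac1m$ from \eqref{eq:simplex}. Rearranging yields $\gamma_\Phi\le N(m-1)/(m(N-1))$.

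\textbf{Step 3 (equality case).} Equality in Step 2 holds iff all $N-1$ eigenvalues on $\bm 1_N^\perp$ equal $\gamma_\Phi$. In that case $\bm G_\Phi=\gamma_\Phi(\bm I_N-\frac1N\bm 1_N\bm 1_N^\top)$. Its off-diagonal entries $|\langle\bm\varphi_i,\bm\varphi_j\rangle|^2-\frac1m$ are then constant, so $\Phi$ is equiangular, hence an ETF. Conversely, Lemma~\ref{etfcenteredgram} gives equality for ETFs. I would state clearly that the ``equality'' clause refers to the $\gamma_\Phi$ bound.

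I expect the main obstacle to be this equality statement, if it is meant to cover the bound on $N$ as well. An ETF need not have $N=D$; the Singer ETFs are an example. So equality in the $N$ bound characterizes only the maximal ETFs, not all ETFs. Conversely, $N=D$ does not obviously force equiangularity. I would therefore attach ``if and only if'' to the $\gamma_\Phi$ inequality and handle the $N$ bound purely by the rank count. I would also check the rank step carefully: the map $\bm w\mapsto\sum_i w_iQ_i$ has kernel exactly $\Span\{\bm 1_N\}$ when $\gamma_\Phi>0$, so its image has dimension $N-1$.
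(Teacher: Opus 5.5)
Your proof is correct and is essentially the paper's proof: the rank of $\bm G_\Phi$ (equal to $N-1$) is bounded by the dimension of the traceless self-adjoint operators, the trace identity gives $(N-1)\gamma_\Phi\le N(1-\frac1m)$, and equality forces $\bm G_\Phi=\gamma_\Phi(\bm I_N-\frac1N\bm 1_N\bm 1_N^\top)$, whose constant off-diagonal entries give equiangularity, with Lemma~\ref{etfcenteredgram} supplying the converse. Your reading that the equality clause concerns only the $\gamma_\Phi$ bound matches the paper's proof, and no other reading is possible: for $m\ge3$ the real complete-graph edge frame of Section~\ref{sec:graph} has $N=m(m+1)/2$ and $\gamma_\Phi>0$ but is not equiangular. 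One small fix: in your opening line the $Q_i$ \emph{lie in}, rather than span, the $(D-1)$-dimensional space of traceless self-adjoint operators.
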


\begin{proof}
    Since the kernel of the Gram matrix $\bm G_\Phi$ is $\Span\{\bm 1_N\}$ and it is positive definite on $\bm 1_N^\perp$, its rank equals $N-1$.
    At the same time the rank of $\bm G_\Phi$ is at most the real dimension of the traceless self-adjoint operators on $\mathds F^m$, which is $m(m+1)/2-1$ in the real case and $m^2-1$ in the complex case.
    This proves the first assertion.

    The Gram matrix has $N-1$ eigenvalues on $\bm 1_N^\perp$, all at least $\gamma_\Phi$.
    Thus,
    \begin{equation*}
        (N-1)\gamma_\Phi \le \trace\bm G_\Phi
        = \sum_{i=1}^N\|Q_i\|_{\rm HS}^2
        = N\Big(1-\frac1m\Big) ,
    \end{equation*}
    which proves the second assertion.
    Equality holds precisely when all nonzero eigenvalues of $\bm G_\Phi$ agree, that is,
    \begin{equation*}
        \bm G_\Phi
        =\gamma_\Phi
        \Big(\bm I_N-\frac1N\bm 1_N\bm 1_N^\top\Big).
    \end{equation*}
    Due to Lemma~\ref{etfcenteredgram} this characterizes an ETF.
\end{proof}

For fixed $m$ and $N$, Theorem~\ref{simplexextremality} shows that ETFs maximize the spectral gap $\gamma_\Phi$ and hence maximize the lower bound furnished by Theorem~\ref{untfbound}.
Moreover, the resulting ETF lower bound is increasing in $N$, so among the ETFs available in a given dimension the largest frames give the strongest obstruction.

\subsection{Maximal complex ETFs}

The restriction $N\le m^2$ in Theorem~\ref{simplexextremality} is known as the complex Gerzon bound, i.e., there are at most $m^2$ equiangular lines in $\mathds C^m$.
A maximal complex ETF has exactly $N=m^2$ vectors.
After scaling its rank-one projectors by $1/m$, it is a symmetric informationally complete positive operator-valued measure (SIC-POVM) in quantum information.
Zauner conjectured that such a system exists in every dimension, cf.~\cite{Zauner99}.
So far, only a finite number of maximal ETFs are known, and the conjecture remains open.
The history and known constructions of such frames are surveyed in \cite{FM15, FHS17}.

Plugging in maximal ETFs in Theorem~\ref{etfbound} yields the following statement.

\begin{corollary}[Maximal ETF bound]\label{zaunerbound}
    Suppose that a maximal ETF exists in $\mathds C^m$.
    Any set of $n$ points and positive weights satisfying \eqref{eq:mz} on the $m$-dimensional space $\mathcal F_{\Phi}$ from \eqref{eq:framespace} with distortion $0\le \varepsilon<1$ obeys
    \begin{equation}\label{eq:zaunerbound}
        n\ge
        \max\Big\{
        m,
        \frac{m^2(1-\varepsilon^2)}
             {1+m\varepsilon^2}
        \Big\}.
    \end{equation}
    Conditional on Zauner's conjecture, this holds in every dimension.
\end{corollary}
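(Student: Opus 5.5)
The corollary follows by specializing Theorem~\ref{etfbound} to $N=m^2$. Let $\Phi=\{\bm\varphi_1,\dots,\bm\varphi_{m^2}\}\subset\mathds C^m$ be a maximal ETF. It is a UNTF in $\mathcal H=\mathds C^m$, so Proposition~\ref{framemz} shows that $\mathcal F_\Phi$ is an $m$-dimensional complex function space. Moreover, any points and positive weights satisfying \eqref{eq:mz} on $\mathcal F_\Phi$ correspond, after the global rescaling $w_i\mapsto (m/N)w_i$, to a weighted subframe with the same support and the same distortion. Theorem~\ref{etfbound} is stated directly in terms of \eqref{eq:mz} on $\mathcal F_\Phi$, so it applies verbatim.

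It remains to simplify. With $N=m^2$ we have
\begin{equation*}
    \frac{N-m}{m-1}=\frac{m(m-1)}{m-1}=m ,
\end{equation*}
so the second term becomes
\begin{equation*}
    \frac{m^2(1-\varepsilon^2)}{1+m\varepsilon^2},
\end{equation*}
which is \eqref{eq:zaunerbound}. This requires $m\ge2$ so that the division by $m-1$ is legitimate. For $m=1$ a maximal ETF is a single vector, $N=1$, and the claimed bound reads $n\ge\max\{1,(1-\varepsilon^2)/(1+\varepsilon^2)\}=1$. This holds trivially by the rank argument in Theorem~\ref{untfbound}.

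For the final sentence, Zauner's conjecture asserts that a SIC-POVM exists in every dimension $m$. As recalled above, a SIC-POVM is, after rescaling its projectors by $m$, a maximal complex ETF with $m^2$ vectors. The hypothesis is therefore satisfied for every $m$, and the bound holds in every dimension.

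There is no real obstacle here. The only point needing care is checking that the hypothesis $\gamma_\Phi>0$ underlying Theorem~\ref{untfbound} holds. This follows from Lemma~\ref{etfcenteredgram}, which gives $\gamma_\Phi=\frac{N(m-1)}{m(N-1)}=\frac{m}{m+1}>0$ for $m\ge2$.
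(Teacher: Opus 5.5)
Your proposal is correct and matches the paper's proof, which consists of setting $N=m^2$ in Theorem~\ref{etfbound}, so that $\frac{N-m}{m-1}=m$. Your extra checks are fine and harmless: $\gamma_\Phi=\frac{m}{m+1}>0$ is already covered by Lemma~\ref{etfcenteredgram}, and the separate treatment of $m=1$ handles a degenerate case (division by $m-1$) that the paper leaves implicit.
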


\begin{proof}
    Set $N=m^2$ in Theorem~\ref{etfbound}.
\end{proof}

\subsection{Singer ETFs}

A deterministic family close to the complex Gerzon bound is available in infinitely many dimensions. A $(v,k,\lambda)$ difference set in a finite group $\Gamma$ is a subset $D\subset\Gamma$ of cardinality $k$ such that every nonidentity element has exactly $\lambda$ representations as $d_1d_2^{-1}$ with $d_1,d_2\in D$. Singer's construction gives a cyclic
\begin{equation*}
    (q^2+q+1,q+1,1)
\end{equation*}
difference set whenever $q$ is a prime power~\cite{Singer38}.

Let
\begin{equation*}
    M=q^2+q+1,
    \quad
    \omega=\exp(2\pi\mathrm i/M),
\end{equation*}
identify the cyclic group with $\mathds Z_M$, and let $D\subset\mathds Z_M$ be a Singer difference set. Define
\begin{equation}\label{eq:singer-frame}
    \bm\varphi_i
    =\frac1{\sqrt{q+1}}\big(\omega^{ik}\big)_{k\in D},
    \quad i\in\mathds Z_M.
\end{equation}
These vectors are the normalized columns of the Fourier submatrix obtained by restricting the rows indices to $D$; orthogonality of those rows shows tightness.
To show equiangularity, let $i\neq j$.
The difference-set property yields
\begin{align*}
    |\langle\bm\varphi_i,\bm\varphi_j\rangle|^2
    &=\frac1{(q+1)^2}
      \sum_{k,\ell\in D}\omega^{(i-j)(k-\ell)}\\
    &=\frac1{(q+1)^2}
      \Big((q+1)+\sum_{r=1}^{M-1}\omega^{(i-j)r}\Big)\\
    &=\frac{q}{(q+1)^2}.
\end{align*}
Thus \eqref{eq:singer-frame} is an ETF with
\begin{equation}\label{eq:singer-parameters}
    m=q+1,
    \quad
    N=q^2+q+1=m^2-m+1.
\end{equation}
This harmonic-frame construction is also discussed in \cite[Section~2.1.2]{SH03}.

\begin{corollary}[Singer-ETF bound]\label{cor:singer-etf}
    Let $m-1$ be a prime power and $\Phi$ be the Singer ETF.
    Any set of $n$ points and positive weights satisfying \eqref{eq:mz} on the $m$-dimensional space $\mathcal F_{\Phi}$ from \eqref{eq:framespace} with distortion $0\le \varepsilon<1$ obeys
    \begin{equation*}
        n
        \ge \max\Big\{
            m,
            \frac{(m^2-m+1)(1-\varepsilon^2)}{1+(m-1)\varepsilon^2}
        \Big\}.
    \end{equation*}
\end{corollary}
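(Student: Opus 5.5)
The plan is to apply Theorem~\ref{etfbound} directly to the Singer ETF constructed in \eqref{eq:singer-frame}, using the parameters \eqref{eq:singer-parameters}. First I record that $\Phi$ really is an ETF in $\mathds C^m$ with $m=q+1$ and $N=m^2-m+1$. Tightness follows from the orthogonality of the rows of the $M\times M$ Fourier matrix restricted to the index set $D$. Equiangularity was computed above from the difference-set property: $|\langle\bm\varphi_i,\bm\varphi_j\rangle|^2=q/(q+1)^2$ for $i\ne j$. As a consistency check, this value equals the Welch bound $\frac{N-m}{m(N-1)}$, since $N-m=q^2$, $N-1=q(q+1)$ and $m=q+1$. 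So Lemma~\ref{etfcenteredgram} applies, and $\gamma_\Phi=\frac{N(m-1)}{m(N-1)}>0$.

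Next I invoke Theorem~\ref{etfbound}. The bound $n\ge m$ carries over unchanged. For the second term, I substitute $N=m^2-m+1$ into
\begin{equation*}
    \frac{N(1-\varepsilon^2)}{1+\frac{N-m}{m-1}\varepsilon^2}.
\end{equation*}
Since $N-m=m^2-2m+1=(m-1)^2$, we get $\frac{N-m}{m-1}=m-1$, and the denominator becomes $1+(m-1)\varepsilon^2$. This is exactly the claimed expression.

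Finally, I make sure the hypothesis matches. The condition that $m-1$ is a prime power means $q=m-1$ is a prime power, so Singer's theorem~\cite{Singer38} supplies the cyclic $(q^2+q+1,q+1,1)$ difference set needed for the construction.

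There is no real obstacle here. The only step needing care is verifying that the ETF parameters attain the Welch bound, which justifies using the ETF form of $\gamma_\Phi$. The simplification $N-m=(m-1)^2$ is the single algebraic point.
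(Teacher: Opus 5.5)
Your proposal is correct and matches the paper's proof, which simply substitutes the Singer parameters $m=q+1$, $N=m^2-m+1$ into Theorem~\ref{etfbound}, where the key simplification is $\frac{N-m}{m-1}=m-1$ because $N-m=(m-1)^2$. Your extra check that $q/(q+1)^2$ equals the Welch bound is harmless but not needed, since equiangularity together with tightness already makes Theorem~\ref{etfbound} applicable.
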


\begin{proof}
    Use \eqref{eq:singer-parameters} in Theorem~\ref{etfbound}.
\end{proof}

The recent Singer--Zauner gap theorem~\cite{FJM26} shows that no complex ETF can have a cardinality strictly between $m^2-m+1$ and $m^2$.
Hence the Singer and maximal cases cover the two largest possible ETF cardinalities.
This fact is not needed for the lower bounds, but it clarifies why the two constructions are the natural near-extremal instances of Theorem~\ref{etfbound}.
 \section{Graph-based construction}\label{sec:graph}

Since ETFs with quadratically many elements are not known in every dimension, the edge frame of a complete graph supplies a uniform substitute.
It is not equiangular, but its centered-projector gap $\gamma_\Phi$ is bounded below by an absolute constant.
While the presented construction is done for $\mathds F=\mathds R$, all results hold with $\mathds F=\mathds C$ for the underlying field as well.

\subsection{The complete-graph edge frame}

Let $\mathcal K=(V,\binom V2)$ be the complete graph on $|V| = m+1 \ge3$ vertices, and let
\begin{equation*}
    \mathcal H_{\mathcal K}=\bm 1_{|V|}^\perp\subset\mathds R^{|V|}.
\end{equation*}
In this section, the subspace $\mathcal H$ from Section~\ref{sec:untf} is $\mathcal H_{\mathcal K}$ with dimension $|V|-1 = m$.
The graph Laplacian of the complete graph is
\begin{equation*}
    \bm L_{\mathcal K}
    = \bm D - \bm A
    = |V|\bm I - \bm 1_{|V|}\bm 1_{|V|}^\top ,
\end{equation*}
where $\bm D$ is the degree matrix and $\bm A$ is the adjacency matrix.
We have $|V|\Id_{\mathcal H_{\mathcal K}} = \bm L_{\mathcal K}|_{\mathcal H_{\mathcal K}}$, where the restriction is regarded as an endomorphism of $\mathcal H_{\mathcal K}$.
For the edges $e=\{u,v\}\in\binom{V}{2}$ of the complete graph, we define
\begin{equation}\label{eq:edgeframe}
    \bm\varphi_e
    =\frac{\bm e_u-\bm e_v}{\sqrt2}\in \mathcal H_{\mathcal K}.
\end{equation}
where $\bm e_u$ denotes the unit vector in $\mathds R^{|V|}$.
Note that the vectors $\bm\varphi_e$ do not act on the full space $\mathds R^{|V|}$.
There are $N=\binom {|V|}2=\frac{m(m+1)}2$ such vectors.

\begin{lemma}[Spectrum of the centered edge projectors]\label{edgeframespectrum}
    The vectors $\{\bm\varphi_e : e\in\binom{V}{2}\}$ from \eqref{eq:edgeframe} form a UNTF for $\mathcal H_{\mathcal K}$.
    The centered-projector Gram matrix $\bm G_{\mathcal K}$ from \eqref{eq:gram} has eigenvalue zero on $\bm 1_N$, and, on $\bm 1_N^\perp$, the eigenvalues
    \begin{equation*}
        \frac {|V|}4 \text{ with multiplicity }|V|-1
        \quad\text{and}\quad
        \frac12 \text{ with multiplicity }\frac{|V|(|V|-3)}2.
    \end{equation*}
    Consequently,
    \begin{equation*}
        \gamma_{\mathcal K}
        =\begin{cases}
            3/4,&|V|=3,\\
            1/2,&|V|\ge4.
        \end{cases}
    \end{equation*}
\end{lemma}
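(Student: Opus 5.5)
The plan is to compute $\bm G_{\mathcal K}$ explicitly in terms of the line graph of $\mathcal K$, and then read off its spectrum from the incidence matrix of $\mathcal K$. Throughout write $n=|V|=m+1$ and $N=\binom n2$.

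\textbf{Tightness.} The plan is to note that $\sum_e \bm\varphi_e\bm\varphi_e^\top=\tfrac12\sum_{\{u,v\}}(\bm e_u-\bm e_v)(\bm e_u-\bm e_v)^\top=\tfrac12\bm L_{\mathcal K}$. On $\mathcal H_{\mathcal K}$ this equals $\tfrac n2\Id_{\mathcal H_{\mathcal K}}$. Since $N/m=(m+1)/2=n/2$, this is exactly the UNTF condition. Each $\bm\varphi_e$ is a unit vector lying in $\bm 1^\perp$.

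\textbf{Entries of $\bm G_{\mathcal K}$.} I will use \eqref{eq:simplex}, which holds for any unit vectors: $\trace(Q_eQ_f)=|\langle\bm\varphi_e,\bm\varphi_f\rangle|^2-\frac1m$. The inner product $\langle\bm\varphi_e,\bm\varphi_f\rangle$ equals $1$ if $e=f$, equals $\pm\frac12$ if $e,f$ share exactly one vertex, and equals $0$ if they are disjoint. Let $\bm A_{\rm line}$ be the adjacency matrix of the line graph of $K_n$. Then
\begin{equation*}
    \bm G_{\mathcal K}=\bm I_N+\tfrac14\bm A_{\rm line}-\tfrac1m\bm 1_N\bm 1_N^\top .
\end{equation*}

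\textbf{Spectrum via the incidence matrix.} Let $\bm B\in\{0,1\}^{n\times N}$ be the unsigned vertex--edge incidence matrix. Then $\bm B^\top\bm B=2\bm I_N+\bm A_{\rm line}$ and $\bm B\bm B^\top=\bm D+\bm A=(n-2)\bm I_n+\bm 1_n\bm 1_n^\top$.

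The eigenvalues of $\bm B\bm B^\top$ are $2n-2$ on $\bm 1_n$ and $n-2$ with multiplicity $n-1$. Since $n-2>0$, $\bm B\bm B^\top$ is nonsingular, so $\bm B$ has rank $n$. Hence $\bm B^\top\bm B$ has the same nonzero eigenvalues, together with eigenvalue $0$ of multiplicity $N-n=n(n-3)/2$.

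The eigenvector of $\bm B^\top\bm B$ for $2n-2$ is $\bm B^\top\bm 1_n=2\cdot\bm 1_N$. All other eigenvectors can be chosen orthogonal to it, hence lie in $\bm 1_N^\perp$, where the rank-one term of $\bm G_{\mathcal K}$ vanishes. Substituting $\bm A_{\rm line}=\bm B^\top\bm B-2\bm I_N$:
\begin{itemize}
    \item On $\bm 1_N$: $1+\frac{2n-4}{4}-\frac Nm=\frac n2-\frac n2=0$, consistent with $\sum Q_e=0$.
    \item On $\bm 1_N^\perp$, from the eigenvalue $n-2$ of $\bm B^\top\bm B$: $1+\frac{n-4}{4}=\frac n4$, with multiplicity $n-1$.
    \item On $\bm 1_N^\perp$, from the eigenvalue $0$ of $\bm B^\top\bm B$: $1-\frac12=\frac12$, with multiplicity $n(n-3)/2$.
\end{itemize}

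\textbf{Value of $\gamma_{\mathcal K}$.} For $n=3$ the eigenvalue $\frac12$ has multiplicity zero, so $\gamma_{\mathcal K}=\frac34$. For $n\ge4$ we have $\frac n4\ge1>\frac12$, and $\frac12$ occurs with positive multiplicity, so $\gamma_{\mathcal K}=\frac12$.

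The only step needing real care is the eigenvector bookkeeping. One must check that the $(n-2)$- and $0$-eigenspaces of $\bm B^\top\bm B$ are orthogonal to $\bm 1_N$, so that the rank-one correction does not shift them. This follows from the self-adjointness of $\bm B^\top\bm B$ and the fact that $\bm 1_N$ is itself an eigenvector.
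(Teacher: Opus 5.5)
Your proposal is correct and follows the paper's proof essentially step by step: tightness via $\tfrac12\bm L_{\mathcal K}$, the identity $\bm G_{\mathcal K}=\bm I_N+\tfrac14\bm A_{\mathrm L}-\tfrac1m\bm 1_N\bm 1_N^\top$, and the line-graph spectrum read off from the unsigned incidence matrix via $\bm B^\top\bm B$ and $\bm B\bm B^\top$. Two points you make explicit are stated only briefly in the paper: that $\bm B\bm B^\top$ is nonsingular, so the zero eigenvalue of $\bm B^\top\bm B$ has multiplicity exactly $N-|V|$, and that the remaining eigenspaces lie in $\bm 1_N^\perp$, so the rank-one correction does not shift them.
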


\begin{proof}
    For the projector $P_e\colon\mathcal H_{\mathcal K}\to\mathcal H_{\mathcal K}, \bm a\mapsto \langle\bm a,\bm\varphi_e\rangle\bm\varphi_e$, we obtain
    \begin{equation*}
        \sum_{e\in\binom V2}P_e
        =\frac12\bm L_{\mathcal K}\big|_{\mathcal H_{\mathcal K}}
        =\frac {|V|}2\Id_{\mathcal H_{\mathcal K}}
        =\frac Nm\Id_{\mathcal H_{\mathcal K}}.
    \end{equation*}
    Since the edge vectors $\bm\varphi_e$ are unit-norm by definition, they form a UNTF.

    Let $\bm A_{\mathrm L} \in\mathds R^{N\times N}$ be the adjacency matrix of the line graph of $\mathcal K$.
    Let
    \begin{equation*}
        \bm B\in\mathds R^{|V|\times N}
        \quad\text{with entries}\quad
        b_{v,e} = \begin{cases}1 & \text{for } v\in e\,,\\0& \text{otherwise.}\end{cases}
    \end{equation*}
    be the unsigned vertex-edge incidence matrix of $\mathcal K$.
    Because every edge contains two vertices and two distinct edges contribute one precisely when they meet, we have the relation
    \begin{equation*}
        \bm B^\top\bm B
        = \Big[ \sum_{v\in V} \delta_{v\in e}\delta_{v\in f} \Big]_{e,f\in \binom{V}{2}}
        = 2\bm I_N+\bm A_{\mathrm L} .
    \end{equation*}
    On the other hand, we have
    \begin{equation*}
        \bm B\bm B^\top
        = \Big[ \sum_{e\in\binom{V}{2}} \delta_{v\in e}\delta_{u\in e} \Big]_{u,v\in V}
        = (|V|-2)\bm I_{|V|}+\bm 1_{|V|}\bm 1_{|V|}^\top .
    \end{equation*}
    The latter matrix has eigenvalues $2(|V|-1)$ with multiplicity one and $|V|-2$ with multiplicity $|V|-1$.
    Since $\bm B^\top\bm B$ has the same nonzero eigenvalues, the spectrum of the line-graph matrix $\bm A_{\mathrm L}$ is
    \begin{center}
    \begin{tabular}{c|ccc}
        eigenvalue & $2(|V|-2)$ & $|V|-4$ & $-2$ \\
        \hline
        multiplicity & $1$ & $|V|-1$ & $N-|V|$
    \end{tabular}
    \end{center}

    Now we relate the spectrum of $\bm A_{\mathrm L}$ to the centered-projector Gram matrix $\bm G_{\mathcal K}$.
    For the Gram matrix of the projectors we have the entries
    \begin{equation*}
        \trace(P_eP_f)
        = |\langle\bm\varphi_e,\bm\varphi_f\rangle|^2
        = \begin{cases} 1 & \text{for }e=f \,, \\ 1/4 & \text{if two edges } e\neq f \text{ meet in one vertex,}\\ 0 & \text{if the edges } e\text{ and } f \text{ are disjoint.}\end{cases}
    \end{equation*}
    Therefore $[\trace(P_eP_f)]_{e,f\in\binom{V}{2}} = \bm I_{N} + \frac 14 \bm A_{\mathrm L}$ has the eigenvalues
    \begin{center}
    \begin{tabular}{c|ccc}
        eigenvalue & $|V|/2$ & $|V|/4$ & $1/2$ \\
        \hline
        multiplicity & $1$ & $|V|-1$ & $N-|V|$
    \end{tabular}
    \end{center}
    Because every edge meets $2(|V|-2)$ other edges, we obtain
    \begin{equation*}
        \bm A_{\mathrm L}\bm 1_{N} = 2(|V|-2)\bm 1_{N}
        \quad\Leftrightarrow\quad
        [\trace(P_eP_f)]_{e,f\in\binom{V}{2}}\bm 1_{N}
        = \frac{|V|}{2}\bm 1_{N} ,
    \end{equation*}
    i.e., this determines the one-dimensional eigenspace for the eigenvalue $|V|/2$.
    To obtain the Gram matrix for the centered projectors, we have to subtract $m^{-1}\bm 1_N\bm 1_N^\top$
    \begin{equation*}
        \bm G_{\mathcal K}
        = \bm I_{N} + \frac 14 \bm A_{\mathrm L} - \frac 1m\bm 1_{N}\bm 1_{N}^\top .
    \end{equation*}
    This cancels the eigenvalue ${|V|}/2$ in the direction $\bm 1_N$ and leaves the other eigenvalues unchanged, proving the assertion.
\end{proof}

\begin{remark}\label{twoperpspaces}
    Two different orthogonal complements appear.
    The vertex space $\mathcal H_{\mathcal K}=\bm 1_{|V|}^\perp$ removes the common kernel of graph Laplacians and is the Hilbert space on which the frame acts.
    The coefficient space $\bm 1_N^\perp$ removes uniform reweighting of the frame elements and is where the centered-projector gap is measured.
\end{remark}

\subsection{Support lower bound for spectral sparsifiers}

Let $\mathcal G=(V,E,w)$ be a weighted graph with $E\subseteq\binom{V}{2}$.
The weight function $w:E\to(0,\infty)$ is defined on the edges, and we set $w_e=0$ for $e\notin E$.
Its weighted Laplacian is
\begin{equation}\label{eq:laplacian}
    \bm L_{\mathcal G}
    =\bm D_w-\bm W
    =\sum_{\{u,v\}\in E}w_{\{u,v\}} (\bm e_u-\bm e_v)(\bm e_u-\bm e_v)^\top
\end{equation}
with the weight matrix $\bm W = [w_{\{u,v\}}]_{u,v\in V}$ and the weighted degree matrix $\bm D_w = \diag(\deg_w(u))_{u\in V}$, where $\deg_w(u) = \sum_{v\in V} w_{\{u,v\}}$.

The graph $\mathcal G$ is a \emph{spectral sparsifier} of $\mathcal K$ with distortion $\varepsilon$ if
\begin{equation}\label{eq:spectralsparsifier}
    (1-\varepsilon)\bm L_{\mathcal K}
    \preceq \bm L_{\mathcal G}
    \preceq (1+\varepsilon)\bm L_{\mathcal K} .
\end{equation}

\begin{theorem}\label{spectralgraphbound}
    Every spectral sparsifier of the complete graph $\mathcal K$ with $0\le\varepsilon<1$ satisfies
    \begin{equation*}
        |E|\ge
        \max\left\{
        {|V|}-1,
        \frac{{|V|}({|V|}-1)(1-\varepsilon^2)}
             {2\big(1+({|V|}-1)\varepsilon^2\big)}
        \right\}.
    \end{equation*}
\end{theorem}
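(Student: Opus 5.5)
We will reduce the sparsifier condition to a subframe condition for the complete-graph edge frame and then invoke Theorem~\ref{untfbound} with the gap from Lemma~\ref{edgeframespectrum}.

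\emph{Step 1: reduction to a subframe.} Set $m=|V|-1$ and $N=\binom{|V|}2$. All Laplacians vanish on $\bm 1_{|V|}$ and preserve $\mathcal H_{\mathcal K}$, so \eqref{eq:spectralsparsifier} is equivalent to the same inequality restricted to $\mathcal H_{\mathcal K}$. By \eqref{eq:laplacian} and \eqref{eq:edgeframe},
\begin{equation*}
    \bm L_{\mathcal G}\big|_{\mathcal H_{\mathcal K}}=\sum_{e\in E}2w_eP_e,
    \qquad
    \bm L_{\mathcal K}\big|_{\mathcal H_{\mathcal K}}=|V|\Id_{\mathcal H_{\mathcal K}}.
\end{equation*}
With the rescaled weights $\tilde w_e=2w_e/|V|>0$, the condition \eqref{eq:spectralsparsifier} therefore becomes
\begin{equation*}
    (1-\varepsilon)\Id\preceq\sum_{e\in E}\tilde w_eP_e\preceq(1+\varepsilon)\Id .
\end{equation*}
This is exactly the hypothesis of Theorem~\ref{untfbound} with $J=E$.

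\emph{Step 2: apply the general bound.} Lemma~\ref{edgeframespectrum} shows that the edge vectors form a UNTF and gives $\gamma_{\mathcal K}\ge 1/2$ for all $|V|\ge 3$. The lower bound
\begin{equation*}
    \frac{m\gamma N(1-\varepsilon^2)}{m\gamma(1-\varepsilon^2)+N\varepsilon^2}
\end{equation*}
from Theorem~\ref{untfbound} is increasing in $\gamma$: dividing numerator and denominator by $\gamma$ leaves only the term $N\varepsilon^2/\gamma$ in the denominator depending on $\gamma$, and it decreases as $\gamma$ grows. We may therefore insert $\gamma=1/2$ and use $N=m(m+1)/2$. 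The expression becomes
\begin{equation*}
    \frac{\tfrac m2\cdot\tfrac{m(m+1)}2(1-\varepsilon^2)}{\tfrac m2(1-\varepsilon^2)+\tfrac{m(m+1)}2\varepsilon^2}
    =\frac{m(m+1)(1-\varepsilon^2)}{2\big(1-\varepsilon^2+(m+1)\varepsilon^2\big)}
    =\frac{m(m+1)(1-\varepsilon^2)}{2(1+m\varepsilon^2)} .
\end{equation*}
Substituting $m=|V|-1$ gives the second term of the claimed bound. The first term, $|E|\ge m=|V|-1$, is the rank part of Theorem~\ref{untfbound}.

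\emph{Main obstacle.} The only delicate point is Step 1. One has to check that the inequalities on all of $\mathds R^{|V|}$ restrict correctly to $\mathcal H_{\mathcal K}$, using that both Laplacians annihilate $\bm 1_{|V|}$ and map $\mathcal H_{\mathcal K}$ into itself. One also has to track the factor $2/|V|$ in the weights carefully. Since $\tilde w_e$ are arbitrary positive weights, this rescaling does not affect the support size $|E|$.
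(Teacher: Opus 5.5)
Your proposal is correct and matches the paper's proof: restrict to $\mathcal H_{\mathcal K}$, rewrite the sparsifier condition as the normalized subframe inequality with weights $2w_e/|V|$, and apply Theorem~\ref{untfbound} with $m=|V|-1$, $N=\binom{|V|}{2}$, and $\gamma_{\mathcal K}\ge 1/2$. Your remark that the bound is increasing in $\gamma$ also justifies a step the paper leaves implicit, namely inserting the lower bound $1/2$ in place of the exact value $\gamma_{\mathcal K}$.
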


\begin{proof}
    On $\mathcal H_{\mathcal K}$, the complete-graph Laplacian equals ${|V|}\Id_{\mathcal H_{\mathcal K}}$.
    Hence \eqref{eq:spectralsparsifier} is equivalent to
    \begin{equation*}
        (1-\varepsilon)\Id_{\mathcal H_{\mathcal K}}
        \preceq \frac1{|V|}\bm L_{\mathcal G}\big|_{\mathcal H_{\mathcal K}}
        =\sum_{e\in E}\frac{2w_e}{{|V|}}P_e
        \preceq (1+\varepsilon)\Id_{\mathcal H_{\mathcal K}},
    \end{equation*}
    which is a normalized subframe inequality.
    It remains to apply Theorem~\ref{untfbound} with $m={|V|}-1$, $N={|V|}({|V|}-1)/2$, and $\gamma_{\mathcal K}\ge 1/2$ from Lemma~\ref{edgeframespectrum}.
\end{proof}

The upper bound is supplied by Batson, Spielman, and Srivastava \cite{BSS14}: every weighted graph admits a spectral sparsifier with $\mathcal O(|V|/\varepsilon^2)$ edges.
Thus Theorem~\ref{spectralgraphbound} is a lower-bound counterpart for the complete graph.

A related stronger estimate on the minimal degree appears in \cite[Proposition~4.2]{BSS14} using more graph-specific tools.
The argument presented here is weaker than the graph-specific minimal-degree obstruction, but it has the advantage of arising directly from the same frame mechanism that applies to arbitrary function spaces.

\subsection{Consequences for $L_2$ MZ inequalities}

We define a function space on the edges of $\mathcal K$ by
\begin{equation}\label{eq:graphspace}
    \mathcal F_{\mathcal K} \coloneqq \Big\{
        f_{\bm a} \colon {\textstyle\binom{V}{2}} \to\mathds R : f_{\bm a}(\{u,v\}) = \frac{a_u-a_v}{\sqrt{|V|}},\quad \bm a\in\mathds R^{|V|} \text{ with } \sum_{u\in V}a_u = 0
    \Big\} ,
\end{equation}
where the edge set carries a counting measure.
The space has dimension $|V|-1=m$ and is precisely the frame space \eqref{eq:framespace} generated by the edge vectors \eqref{eq:edgeframe}.

In the following proposition we show that controlling the distortion in $L_2$ MZ inequalities for $\mathcal F_{\mathcal K}$ corresponds to controlling the spectrum of the weighted Laplacian $\bm L_{\mathcal G}$.

\begin{proposition}\label{graphmz}
    Let $\mathcal G = (V,E,w)$ be a weighted graph with $\bm L_{\mathcal G}$ its weighted graph Laplacian \eqref{eq:laplacian}.
    Further, let $\mathcal F_{\mathcal K}$ be the function space \eqref{eq:graphspace} defined on the edges of the complete graph.
    Then, for $0\le \varepsilon <1$, the weighted graph $\mathcal G$ is a spectral sparsifier of the complete graph $\mathcal K$
    \begin{equation*}
        (1-\varepsilon)\bm L_{\mathcal K} \preceq \bm L_{\mathcal G} \preceq (1+\varepsilon)\bm L_{\mathcal K}
    \end{equation*}
    if and only if the edges $E$ and weights $w$ satisfy the $L_2$ MZ inequality
    \begin{equation*}
        (1-\varepsilon)\|f\|_{L_2}^{2}
        \le \sum_{e\in E} w_e |f(e)|^2
        \le (1+\varepsilon)\|f\|_{L_2}^{2}
        \quad\text{for all }f\in \mathcal F_{\mathcal K}.
    \end{equation*}
\end{proposition}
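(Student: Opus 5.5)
We compute both sides as quadratic forms in the same parameter $\bm a\in\mathcal H_{\mathcal K}=\bm 1_{|V|}^\perp$ and check that they coincide; this is a direct specialization of Proposition~\ref{framemz}. For $f=f_{\bm a}\in\mathcal F_{\mathcal K}$, the definition \eqref{eq:graphspace} gives
\begin{equation*}
    \|f_{\bm a}\|_{L_2}^2
    =\frac1{|V|}\sum_{\{u,v\}\in\binom V2}(a_u-a_v)^2
    =\frac1{|V|}\bm a^\top\bm L_{\mathcal K}\bm a
    =\|\bm a\|_2^2,
\end{equation*}
where the last step uses $\bm L_{\mathcal K}=|V|\bm I-\bm 1\bm 1^\top$ and $\bm a\perp\bm 1_{|V|}$. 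In the same way, \eqref{eq:laplacian} yields
\begin{equation*}
    \sum_{e\in E}w_e|f_{\bm a}(e)|^2=\frac1{|V|}\bm a^\top\bm L_{\mathcal G}\bm a .
\end{equation*}
Hence the MZ inequality for $\mathcal F_{\mathcal K}$ is equivalent to
\begin{equation*}
    (1-\varepsilon)\,\bm a^\top\bm L_{\mathcal K}\bm a\le\bm a^\top\bm L_{\mathcal G}\bm a\le(1+\varepsilon)\,\bm a^\top\bm L_{\mathcal K}\bm a
    \quad\text{for all }\bm a\in\bm 1_{|V|}^\perp ,
\end{equation*}
since $\bm a\mapsto f_{\bm a}$ is onto $\mathcal F_{\mathcal K}$. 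Here we may use either $\bm a^\top\bm L_{\mathcal K}\bm a$ or $|V|\|\bm a\|_2^2$ on the left and right.

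The main point is to pass from $\bm 1_{|V|}^\perp$ to all of $\mathbb R^{|V|}$, as required by the Loewner order in \eqref{eq:spectralsparsifier}. Both Laplacians annihilate $\bm 1_{|V|}$, because every summand $(\bm e_u-\bm e_v)(\bm e_u-\bm e_v)^\top$ does. So for an arbitrary $\bm x=\bm a+c\bm 1_{|V|}$ we have $\bm x^\top\bm L\bm x=\bm a^\top\bm L\bm a$ for $\bm L\in\{\bm L_{\mathcal K},\bm L_{\mathcal G}\}$; the cross terms vanish by symmetry and $\bm L\bm 1=\bm 0$. The inequalities on $\bm 1_{|V|}^\perp$ therefore extend to $\mathbb R^{|V|}$. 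Conversely, restricting the Loewner inequalities on $\mathbb R^{|V|}$ to $\bm 1_{|V|}^\perp$ gives back the MZ inequality. This proves both directions.
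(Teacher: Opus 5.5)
Your proof is correct and is essentially the paper's argument. The paper writes $\bm L_{\mathcal G}|_{\mathcal H_{\mathcal K}}=2\sum_{e\in E}w_eP_e$ and invokes Proposition~\ref{framemz} with $m/N=2/|V|$, which is the same as your direct identification of $\sum_{e\in E}w_e|f_{\bm a}(e)|^2$ with $\frac1{|V|}\bm a^\top\bm L_{\mathcal G}\bm a$ and of $\|f_{\bm a}\|_{L_2}^2$ with $\|\bm a\|_2^2$; your explicit check that both Laplacians annihilate $\bm 1_{|V|}$, so the Loewner inequalities on $\mathds R^{|V|}$ reduce to $\bm 1_{|V|}^\perp$, spells out a step the paper leaves implicit.
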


\begin{proof}
    Let $P_e\colon\mathcal H_{\mathcal K}\to\mathcal H_{\mathcal K}, \bm a\mapsto \langle\bm a,\bm\varphi_e\rangle\bm\varphi_e$ be the projector with $\bm\varphi_e$ defined in \eqref{eq:edgeframe}.
    By \eqref{eq:laplacian} we have
    \begin{equation*}
        \bm L_{\mathcal G} |_{\mathcal H_{\mathcal K}}
        = 2\sum_{e\in E} w_e P_e .
    \end{equation*}
    In particular, $|V|\Id_{\mathcal H_{\mathcal K}} = \bm L_{\mathcal K}|_{\mathcal H_{\mathcal K}} = 2\sum_{e\in\binom{V}{2}}P_e$.
    Thus, spectral graph sparsification is equivalent to
    \begin{equation*}
        |V|(1-\varepsilon)\Id_{\mathcal H_{\mathcal K}}
        \preceq 2\sum_{e\in E} w_e P_e
        \preceq |V|(1+\varepsilon)\Id_{\mathcal H_{\mathcal K}},
    \end{equation*}
    which is the frame condition for the weighted subframe $\{\sqrt{w_e}\bm\varphi_e\}_{e\in E}$ with frame bounds $\frac{|V|}{2}(1-\varepsilon)$ and $\frac{|V|}{2}(1+\varepsilon)$.
    By Proposition~\ref{framemz} this is equivalent to the $L_2$ MZ inequalities using $\frac mN = \frac{2}{m+1} = \frac{2}{|V|}$.
\end{proof}

\begin{corollary}\label{graphmzbound}
    Let $m={|V|}-1$.
    Any set of $n$ points and positive weights satisfying \eqref{eq:mz} on the $m$-dimensional space $\mathcal F_{\mathcal K}$ defined in \eqref{eq:graphspace} with distortion $0\le \varepsilon < 1$ obeys
    \begin{equation}\label{eq:unconditional-mz-lower}
        n\ge
        \max\left\{
        m,
        \frac{m(m+1)(1-\varepsilon^2)}
             {2(1+m\varepsilon^2)}
        \right\}.
    \end{equation}
    At $\varepsilon=0$, all $m(m+1)/2$ edges are necessary, and the bound is sharp.
\end{corollary}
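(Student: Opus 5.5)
The plan is to reduce Corollary~\ref{graphmzbound} to results already proved, combining Proposition~\ref{graphmz} with Theorem~\ref{spectralgraphbound}. As an alternative route, I would also check it directly through Proposition~\ref{framemz} and Theorem~\ref{untfbound} with the gap from Lemma~\ref{edgeframespectrum}.

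First, the points of $\mathcal F_{\mathcal K}$ are edges $e\in\binom V2$, so $n$ points with positive weights define a weighted graph $\mathcal G=(V,E,w)$ with $|E|=n$. By Proposition~\ref{graphmz}, \eqref{eq:mz} on $\mathcal F_{\mathcal K}$ holds if and only if $\mathcal G$ is a spectral sparsifier of $\mathcal K$ with distortion $\varepsilon$. Theorem~\ref{spectralgraphbound} then gives
\begin{equation*}
    n=|E|\ge\max\Big\{|V|-1,\frac{|V|(|V|-1)(1-\varepsilon^2)}{2(1+(|V|-1)\varepsilon^2)}\Big\}.
\end{equation*}
Substituting $|V|=m+1$ yields exactly \eqref{eq:unconditional-mz-lower}.

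Second, the hypothesis $|V|\ge 3$, i.e.\ $m\ge2$, of the graph section should be noted. In the degenerate case $m=1$ the claimed bound reduces to $n\ge1$, which is trivial.

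Third, for the endpoint $\varepsilon=0$ the bound becomes $m(m+1)/2=N$, the total number of edges, so every edge is needed. Sharpness follows because taking all $N$ edges with the uniform weights supplied by tightness in Proposition~\ref{framemz} gives exact equality. In graph language, $\mathcal G=\mathcal K$ is trivially a sparsifier with $\varepsilon=0$.

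The only possible obstacle is bookkeeping: checking that the weight rescaling between the MZ weights and the frame/Laplacian weights preserves the support. Propositions~\ref{framemz} and~\ref{graphmz} already handle this, so no real difficulty is expected.
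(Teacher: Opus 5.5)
Your proposal is correct and matches the paper's proof: the paper also combines Proposition~\ref{graphmz} with Theorem~\ref{spectralgraphbound} and substitutes $m=|V|-1$. Your explicit sharpness argument at $\varepsilon=0$ (all edges with unit weights give exact equality) and your remark on the trivial case $m=1$ spell out details the paper leaves implicit.
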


\begin{proof}
    The statement follows directly from Theorem~\ref{spectralgraphbound} and Proposition~\ref{graphmz} using $m = |V|-1$.
\end{proof}

We now complete the proof announced in the introduction.

\begin{proof}[Proof of Theorem~\ref{maintheorem}]
    The lower bound is Corollary~\ref{graphmzbound}.
    The upper bound is the minimum of the bounds given by the constructive estimate \eqref{eq:constructive-upper} and the exact-discretization estimate \eqref{eq:exact-upper}.
    The exact endpoint is Theorem~\ref{thm:exact-endpoint}.

    It remains to verify the explicit constants in the second chain of estimates.
    Let
    \begin{equation*}
        T=\min\left\{m^2,\frac{m}{\varepsilon^2}\right\}.
    \end{equation*}
    If $\varepsilon^2\geq1/2$, then $T\leq2m$, so the lower bound gives $\mathcal N(m,\varepsilon)\geq T/2$.
    If $\varepsilon^2<1/2$, the second term in \eqref{eq:unconditional-mz-lower} satisfies
    \begin{equation*}
        \frac{m(m+1)(1-\varepsilon^2)}{2(1+m\varepsilon^2)}
        \geq\frac{m^2}{4(1+m\varepsilon^2)}
        \geq\frac T8.
    \end{equation*}
    For the upper bound, \eqref{eq:constructive-upper} gives at most $5m/\varepsilon^2$ points.
    If $T=m^2$, use the exact upper bound \eqref{eq:exact-upper}.
\end{proof}
 \section{Stability and arithmetic cost of least-squares approximation}\label{sec:lsqr}

The \emph{least-squares method} is used to approximate functions from data $y_1,\dots,y_n\in\mathds C$ given at sample points $x_1,\dots,x_n\in\Omega$; see \cite{Barteldiss} and the references therein for a survey.
The target function usually belongs to an infinite-dimensional function space that models smoothness properties.
As a first step, one chooses a suitable finite-dimensional function space $\mathcal F$ that approximates the infinite-dimensional space well.
Having chosen such a space, the weighted least-squares approximation is defined by
\begin{equation}\label{eq:lsqr}
    S_{\mathcal F}^{\bm X} \bm y
    = \argmin_{g\in\mathcal F} \sum_{i=1}^{n}w_i|g(x_i)-y_i|^2 .
\end{equation}
Given an orthonormal basis $\{\eta_1,\dots,\eta_m\}$ of $\mathcal F$, define
\begin{equation*}
    \bm L = [\eta_k(x_i)]_{i=1,\dots,n;\,k=1,\dots,m}\in\mathds C^{n\times m}
    \quad\text{and}\quad
    \bm W=\diag(w_1,\dots,w_n).
\end{equation*}
By \eqref{eq:M} we have $\bm L^\ast\bm W\bm L = \sum_{i=1}^{n}w_i\bm\eta(x_i)\bm\eta(x_i)^\ast = \bm M$.
Then
\begin{equation*}
    S_{\mathcal F}^{\bm X} \bm y = \sum_{k=1}^{m}\hat g_k\eta_k
    \quad\text{with}\quad
    \bm M \bm{\hat g} = \bm L^\ast\bm W^{1/2}\bm y,
\end{equation*}
where $\bm{\hat g} = [\hat g_1,\dots,\hat g_m]^\top$ and $\bm y = [y_1,\dots,y_n]^\top$.
In practice, LSQR is applied directly to $\bm W^{1/2}\bm L\widehat{\bm g}\approx \bm W^{1/2}\bm y$, thereby avoiding the explicit formation of the normal matrix $\bm M$.
In exact arithmetic, its coefficient iterates agree with those of conjugate gradients applied to the normal equations while being more stable, cf.~\cite{PS82}.

The stability of the least-squares problem depends on the conditioning of $\bm W^{1/2}\bm L$, or equivalently of $\bm M$:
\begin{equation*}
    \kappa(\bm M)
    = \frac{\lambda_{\max}(\bm M)}{\lambda_{\min}(\bm M)}
    = \frac{\sigma_{\max}^2(\bm W^{1/2}\bm L)}{\sigma_{\min}^2(\bm W^{1/2}\bm L)},
\end{equation*}
where $\lambda_{\min/\max}$ and $\sigma_{\min/\max}$ denote the smallest and largest eigenvalues and singular values, respectively.
Because $\bm M=\sum_{i=1}^{n}w_i\bm\eta(x_i)\bm\eta(x_i)^\ast$ is the matrix from \eqref{eq:M}, points and weights satisfying an $L_2$ MZ inequality with distortion $\varepsilon$ obey
\begin{equation}\label{eq:singularvalues}
    1-\varepsilon
    \le \sigma_{\min}^2(\bm W^{1/2}\bm L)
    \le \sigma_{\max}^2(\bm W^{1/2}\bm L)
    \le 1+\varepsilon.
\end{equation}

Let $\widehat{\bm g}_\star$ be the exact least-squares coefficient vector and $\widehat{\bm g}_i$ be the $i$-th LSQR iterate.
The standard conjugate-gradient estimate gives
\begin{equation}\label{eq:iterationbound}
    \frac{\|\bm{\hat g}_\star-\bm{\hat g}_i\|_{\bm M}}{\|\bm{\hat g}_\star-\bm{\hat g}_0\|_{\bm M}}
    \le 2q(\bm M)^i
    \quad\text{with}\quad
    q(\bm M)
    \coloneqq
    \frac{\sqrt{\kappa(\bm M)}-1}
         {\sqrt{\kappa(\bm M)}+1},
\end{equation}
where $\|\bm z\|_{\bm M}^2=\bm z^\ast\bm M\bm z$, see e.g., \cite[Theorem~3.1.1]{Gre97}.
This estimate can be pessimistic: actual convergence also depends on the full spectrum of $\bm M$ and on the spectral distribution of the initial error.

A common rescaling of all weights leaves the least-squares minimizer and the condition number unchanged.
It is therefore natural to use the scale-invariant distortion
\begin{equation}\label{eq:scaleinvariantdistortion}
    \varepsilon_\star(\bm M)
    \coloneqq \inf_{c>0}\|c\bm M-\bm I_m\|_2
    =\frac{\kappa(\bm M)-1}{\kappa(\bm M)+1}.
\end{equation}
For a positive definite matrix, the infimum is attained by centering the extreme eigenvalues around one.
Consequently,
\begin{equation}\label{eq:qepsilon}
    q(\bm M)
    =\frac{\varepsilon_\star(\bm M)}
    {1+\sqrt{1-\varepsilon_\star(\bm M)^2}}.
\end{equation}
For $0\le q(\bm M)<1$ and a target relative error $0<\delta<1$, define the iteration count certified by \eqref{eq:iterationbound} as
\begin{equation}\label{eq:certifiediterations}
    n_{\rm iter}(\bm M)
    \coloneqq
    \min\big\{i\in\mathds N_0:2q(\bm M)^i\le\delta\big\}.
\end{equation}
For $0<q(\bm M)<1$, this equals
\begin{equation}\label{eq:certifiediterationsformula}
    n_{\rm iter}(\bm M)
    =\left\lceil\frac{\log(\delta/2)}{\log q(\bm M)}\right\rceil.
\end{equation}
The following result is a lower bound on the certified iteration count, rather than a claim that every LSQR run actually requires that many iterations.

\begin{proposition}\label{niterlower}
    Let $\mathcal F$ be either the function space associated with a complex ETF of $N$ vectors through \eqref{eq:framespace}, or the complete-graph space \eqref{eq:graphspace}; in both cases let $m=\dim\mathcal F$.
    Fix $m\le n\le N/2$ distinct points and positive weights for which the normal matrix $\bm M$ from \eqref{eq:M} is positive definite.
    Then
    \begin{equation}\label{eq:distortionlowerlsqr}
        \varepsilon_\star(\bm M)^2
        \ge \frac{N-n}{N+\beta n}
        \quad\text{with}\quad
        \beta=
        \begin{cases}
            \dfrac{N-m}{m-1} &\text{for the ETF space},\\
            m &\text{for the graph space}.
        \end{cases}
    \end{equation}
    Furthermore, for every $0<\delta<1$,
    \begin{equation}\label{eq:niterlower}
        n_{\rm iter}(\bm M)
        \ge
        \left\lceil
        \frac{2\log(\delta/2)}{\log(m/(24n))}
        \right\rceil.
    \end{equation}
\end{proposition}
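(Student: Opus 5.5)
The plan is to reduce everything to Theorem~\ref{untfbound}, using the scale invariance built into $\varepsilon_\star$. Since $\bm M$ is positive definite, the infimum in \eqref{eq:scaleinvariantdistortion} is attained at some $c>0$. So $c\bm M$ satisfies $(1-\varepsilon_\star)\bm I_m\preceq c\bm M\preceq(1+\varepsilon_\star)\bm I_m$ with $\varepsilon_\star=\varepsilon_\star(\bm M)<1$. The rescaled weights $cw_i$ therefore give an $L_2$ MZ inequality with distortion $\varepsilon_\star$ on the same $n$ points. By Proposition~\ref{framemz}, together with the global rescaling by $m/N$ noted after Theorem~\ref{untfbound}, this is a normalized subframe inequality with $|J|=n$. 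For the graph space, Proposition~\ref{graphmz} identifies it with the edge frame.

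Theorem~\ref{untfbound} then gives $n\ge m\gamma N(1-\varepsilon_\star^2)/(m\gamma(1-\varepsilon_\star^2)+N\varepsilon_\star^2)$. I will solve this linear inequality for $\varepsilon_\star^2$, which yields
\begin{equation*}
    \varepsilon_\star^2\ge\frac{N-n}{N+n\big(\frac{N}{m\gamma}-1\big)}.
\end{equation*}
The right-hand side is increasing in $\gamma$, so any lower bound on $\gamma$ may be inserted. For an ETF, Lemma~\ref{etfcenteredgram} gives $m\gamma=N(m-1)/(N-1)$, hence $\frac{N}{m\gamma}-1=\frac{N-m}{m-1}$. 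For the graph, Lemma~\ref{edgeframespectrum} gives $\gamma\ge1/2$ and $N=m(m+1)/2$, hence $\frac{N}{m\gamma}-1\le m$. This proves \eqref{eq:distortionlowerlsqr}.

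For \eqref{eq:niterlower}, first note from \eqref{eq:qepsilon} that $q(\bm M)\ge\varepsilon_\star/2$, so $q^2\ge\varepsilon_\star^2/4$. By \eqref{eq:certifiediterationsformula} and the monotonicity of the ceiling, it suffices to prove $q^2\ge m/(24n)$. Indeed, both logarithms are then negative, and $\log q\ge\frac12\log(m/(24n))$ gives the claimed inequality of quotients. This reduces the task to showing $\varepsilon_\star^2\ge m/(6n)$ for $m\le n\le N/2$.

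That inequality is the main obstacle: an elementary but fiddly estimate of \eqref{eq:distortionlowerlsqr}. For the ETF space, use $N-n\ge N/2$ and
\begin{equation*}
    N+\beta n\le N\Big(1+\frac{n}{m-1}\Big)\le N\Big(\frac nm+\frac{2n}{m}\Big)=\frac{3Nn}{m},
\end{equation*}
where $n\ge m$ and $m\ge2$ are used. This gives $\varepsilon_\star^2\ge m/(6n)$.

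For the graph space, the required inequality is equivalent to $6n(N-n)-m(N+mn)\ge0$. The left-hand side is concave in $n$, so it suffices to check the endpoints $n=m$ and $n=N/2$. At $n=N/2$ the check reduces to $3m+3\ge2m+4$. At $n=m$ it reduces to $\tfrac32m^3-\tfrac72m^2\ge0$, which holds for $m\ge3$. The remaining case $m=2$ ($|V|=3$) is the one I expect to need separate care. There I would redo the estimate with the exact value $\gamma_{\mathcal K}=3/4$, or check the few admissible values of $n$ directly, since $N=3$ there.
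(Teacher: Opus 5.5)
Your proposal is correct and follows the paper's proof: rescale the weights to attain $\varepsilon_\star(\bm M)$, solve the trace-variance lower bound for the distortion, show $(N-n)/(N+\beta n)\ge m/(6n)$, and conclude $q(\bm M)\ge\varepsilon_\star/2\ge\sqrt{m/(24n)}$. The only deviation is your concavity/endpoint check for the graph space. It is unnecessary: the paper's estimate $N+\beta n\le (N/m+\beta)n\le 3Nn/m$ works there too, since $\beta m=m^2\le m(m+1)=2N$. Also, the case $m=2$ that worried you is vacuous, because $N/2=3/2<m$ leaves no admissible $n$.
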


\begin{proof}
    We choose the common rescaling of the weights that attains \eqref{eq:scaleinvariantdistortion}.
    The rescaled normal matrix then satisfies an MZ inequality with distortion $\varepsilon_\star(\bm M)$.
    Solving the ETF lower bound in Theorem~\ref{etfbound}, respectively the graph lower bound in Corollary~\ref{graphmzbound}, for the distortion gives \eqref{eq:distortionlowerlsqr}.

    In both cases $\beta m\le2N$: for ETFs this follows from $N\le m^2$, and for the graph space it follows from $N=m(m+1)/2$ and $\beta=m$.
    Since $n\ge m$,
    \begin{equation*}
        N+\beta n
        \le \Big(\frac Nm+\beta\Big)n
        =\frac Nm\Big(1+\frac{\beta m}{N}\Big)n
        \le\frac{3Nn}{m}.
    \end{equation*}
    Together with $n\le N/2$, this yields
    \begin{equation*}
        \varepsilon_\star(\bm M)^2
        \ge\frac{N-n}{N+\beta n}
        \ge\frac{m(N-n)}{3Nn}
        \ge\frac{m}{6n}.
    \end{equation*}
    By \eqref{eq:qepsilon},
    \begin{equation*}
        q(\bm M)
        \ge\frac{\varepsilon_\star(\bm M)}2
        \ge\sqrt{\frac{m}{24n}}.
    \end{equation*}
    Substitution into \eqref{eq:certifiediterationsformula} proves \eqref{eq:niterlower}.
\end{proof}

The restriction $n\le N/2$ is used only to obtain the simple constant in \eqref{eq:niterlower}.
This is a natural assumption, as the problem of function approximation becomes less relevant when the function is known on a majority of the domain.
The sharper distortion estimate \eqref{eq:distortionlowerlsqr} remains valid for every $m\le n<N$ and tends to zero as $n$ approaches $N$.

The constructive upper bound gives a converse statement for the iteration count.

\begin{lemma}\label{iterationupper}
    Let $\mathcal F$ be any $m$-dimensional function space and let $n>m$ be an integer.
    There are at most $n$ points and positive weights whose normal matrix $\bm M$ from \eqref{eq:M} satisfies, for every $0<\delta<1$,
    \begin{equation}\label{eq:niterupper}
        n_{\rm iter}(\bm M)
        \le
        \left\lceil
        \frac{2\log(\delta/2)}{\log(m/n)}
        \right\rceil.
    \end{equation}
\end{lemma}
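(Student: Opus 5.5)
We will derive Lemma~\ref{iterationupper} from the constructive frame-subsampling result of \cite{BSU23}, which underlies \eqref{eq:constructive-upper}. For an integer $n>m$, that result gives at most $n$ points and positive weights with
\begin{equation*}
    n \ge m\,\frac{1+\sqrt{1-\varepsilon^2}}{1-\sqrt{1-\varepsilon^2}} .
\end{equation*}
Equivalently, solving for the smallest admissible distortion $\varepsilon$, set $s=\sqrt{1-\varepsilon^2}$. Then $(1+s)/(1-s)=n/m$, so
\begin{equation*}
    s=\frac{n-m}{n+m}
    \quad\text{and}\quad
    \varepsilon^2=1-s^2=\frac{4mn}{(n+m)^2}.
\end{equation*}
(If \cite{BSU23} is phrased with frame bounds $(\sqrt n\pm\sqrt m)^2$ instead, this is the same thing after rescaling, with $\kappa=((\sqrt n+\sqrt m)/(\sqrt n-\sqrt m))^2$.) The first step is therefore to invoke that result with this $\varepsilon$, obtaining $\bm M$ with $(1-\varepsilon)\bm I_m\preceq\bm M\preceq(1+\varepsilon)\bm I_m$.

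Next, \eqref{eq:scaleinvariantdistortion} gives $\varepsilon_\star(\bm M)\le\varepsilon$, because $c=1$ is admissible in the infimum. Since the right-hand side of \eqref{eq:qepsilon} is increasing in $\varepsilon_\star$, we get
\begin{equation*}
    q(\bm M)\le\frac{\varepsilon}{1+s}.
\end{equation*}
Substituting $\varepsilon=2\sqrt{mn}/(n+m)$ and $1+s=2n/(n+m)$ gives
\begin{equation*}
    q(\bm M)\le\frac{\sqrt{mn}}{n}=\sqrt{\frac mn}.
\end{equation*}
The clean form of this identity is the main point to check, and it is just algebra. It can also be seen directly from $\kappa(\bm M)\le(1+\varepsilon)/(1-\varepsilon)=((\sqrt n+\sqrt m)/(\sqrt n-\sqrt m))^2$, which gives $q(\bm M)\le\sqrt{m/n}$ immediately.

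Finally, we pass to the iteration count. If $q(\bm M)=0$, then $n_{\rm iter}(\bm M)=0$ and the bound holds trivially. Otherwise, by \eqref{eq:certifiediterationsformula}, $n_{\rm iter}$ is monotone in $q$: since $\log q\le\frac12\log(m/n)<0$ and $\log(\delta/2)<0$,
\begin{equation*}
    \frac{\log(\delta/2)}{\log q}\le\frac{2\log(\delta/2)}{\log(m/n)}.
\end{equation*}
Taking ceilings yields \eqref{eq:niterupper}.

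The only real obstacle is matching the exact form of the \cite{BSU23} guarantee to $n$. The ceiling in \eqref{eq:constructive-upper} has to be handled by choosing $\varepsilon$ so that the expression inside the ceiling equals $n$ exactly, which is possible for every integer $n>m$ because the map $\varepsilon\mapsto m(1+s)/(1-s)$ is a continuous decreasing bijection onto $(m,\infty)$.
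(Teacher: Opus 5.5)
Your proof is correct and follows the paper's argument step for step. You choose $\varepsilon=2\sqrt{mn}/(m+n)$ so that the bound \eqref{eq:constructive-upper} equals $n$, deduce $q(\bm M)\le\varepsilon/(1+\sqrt{1-\varepsilon^2})=\sqrt{m/n}$, and substitute into \eqref{eq:certifiediterationsformula}.

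One small correction: if $q(\bm M)=0$, then $n_{\rm iter}(\bm M)=1$, not $0$, because at $i=0$ one has $2q^0=2>\delta$. The bound still holds, since the right-hand side of \eqref{eq:niterupper} is the ceiling of a positive number and hence at least $1$.
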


\begin{proof}
    Set
    \begin{equation*}
        \varepsilon=\frac{2\sqrt{mn}}{m+n}.
    \end{equation*}
    Then $\sqrt{1-\varepsilon^2}=(n-m)/(n+m)$ and hence
    \begin{equation*}
        m\frac{1+\sqrt{1-\varepsilon^2}}
        {1-\sqrt{1-\varepsilon^2}}
        =n.
    \end{equation*}
    The constructive subsampling bound \eqref{eq:constructive-upper} therefore provides at most $n$ points and positive weights satisfying \eqref{eq:mz}.
    For the corresponding normal matrix, \eqref{eq:singularvalues} and \eqref{eq:qepsilon} give
    \begin{equation*}
        q(\bm M)
        \le\frac{\varepsilon}{1+\sqrt{1-\varepsilon^2}}
        =\sqrt{\frac mn}.
    \end{equation*}
    If $q(\bm M)=0$, then $n_{\rm iter}(\bm M)=1$ and \eqref{eq:niterupper} is immediate. Otherwise, substitution into \eqref{eq:certifiediterationsformula} proves \eqref{eq:niterupper}.
\end{proof}

One LSQR step requires multiplications by $\bm L$ and $\bm L^\ast$.
If $C_{\rm mv}(n,m)$ denotes their combined arithmetic cost, the overall arithmetic cost becomes
\begin{equation*}
    \operatorname{work}
    \approx C_{\rm mv}(n,m) n_{\rm iter}(\bm M),
\end{equation*}
apart from setup costs.
Depending on structure, $C_{\rm mv}(n,m)$ ranges from order $mn$ for a dense matrix, through order $n\log n$ for Fourier-type transforms, to order $n$ for sparse matrices with only a linear number of nonzero entries.
Proposition~\ref{niterlower} and Lemma~\ref{iterationupper} show that the certified iteration count improves only inverse-logarithmically with the oversampling ratio $n/m$.

Suppose for fixed $m$ that $C_{\rm mv}(n,m)$ is proportional to $n^\alpha$ with $\alpha>0$.
Writing $x=n/m>1$, the qualitative $x$-dependent factor in Proposition~\ref{niterlower} and Lemma~\ref{iterationupper} is
\begin{equation*}
    \frac{x^\alpha}{\log x},
\end{equation*}
which is minimized at $x=\exp(1/\alpha)$.
Thus a small oversampling factor is preferred.
Fourier-type costs change the optimizer slightly but lead to the same qualitative conclusion.
Function evaluations, data acquisition, and point construction add further $n$-dependent costs and may shift the optimum toward fewer samples.

Finally, producing a well-conditioned design can itself dominate the solve.
The implementation analyzed in \cite{BSU23} has a cubic dependence on $m$ in its initial frame-subsampling stage.
By contrast, independent Christoffel sampling gives order $m\log m/\varepsilon^2$ points with high probability for general spaces requiring merely a random draw \cite{CDL13}.
For structured trigonometric spaces, fast specialized rank-1 lattice constructions exist for order $m^2$ point constructions \cite{Kaemmerer20}.
Accordingly, the preferable sample size depends not only on the LSQR iteration estimate, but also on the structure of the evaluation matrix and on the cost of constructing and acquiring the samples.
 
\subsection*{Acknowledgments}

The author thanks Matthew~Fickus for drawing attention to harmonic ETFs arising from Singer difference sets.
The complete-graph construction was initially suggested during an interaction with an AI system.

\printbibliography

\end{document}